\DeclareMathOperator{\im}{im}
\DeclareMathOperator{\End}{End}
\DeclareMathOperator{\Hom}{Hom}
\DeclareMathOperator{\tr}{Tr}
\DeclareMathOperator{\Rm}{Rm}
\DeclareMathOperator{\Ric}{Ric}
\DeclareMathOperator{\vol}{vol}
\DeclareMathOperator{\Sym}{Sym}
\DeclareMathOperator{\Id}{Id}
\DeclareMathOperator{\Diff}{Diff}
\DeclareMathOperator{\SO}{SO}
\DeclareMathOperator{\Lie}{Lie}
\newcommand{\R}{\mathbb R}
\newcommand{\C}{\mathbb C}
\newcommand{\G}{\mathscr G}
\newcommand{\A}{\mathscr A}
\newcommand{\D}{\mathscr D}
\newcommand{\M}{\mathscr M}
\newcommand{\SEH}{S_{\mathrm{EH}}}
\newcommand{\diff}{\mathrm{d}}
\newcommand{\del}{\partial}
\newcommand{\dvol}{\mathrm{dvol}}
\newcommand{\so}{\mathfrak{so}}
\renewcommand{\P}{\mathbb P}
\renewcommand{\geq}{\geqslant}
\theoremstyle{plain}
	\newtheorem{theorem}{Theorem}
	\newtheorem{proposition}[theorem]{Proposition}
	\newtheorem{lemma}[theorem]{Lemma}
\theoremstyle{definition}
	\newtheorem{definition}[theorem]{Definition}
\theoremstyle{plain}
	\newtheorem*{theorem*}{Theorem}
	\newtheorem*{proposition*}{Proposition}
	\newtheorem*{lemma*}{Lemma}
	\newtheorem*{corollary*}{Corollary}
	\newtheorem*{conjecture*}{Conjecture}
\theoremstyle{definition}
	\newtheorem*{definition*}{Definition}
	\newtheorem*{remark*}{Remark}
	\newtheorem*{remarks*}{Remarks}
\numberwithin{equation}{section}
\numberwithin{theorem}{section}
\begin{document}

\title{Local rigidity of Einstein 4-manifolds satisfying a chiral curvature condition}

\author[1]{Joel Fine \thanks{JF was supported by ERC consolidator grant 646649 ``SymplecticEinstein'' and EoS grant 30950721.}}
\author[2]{Kirill Krasnov}
\author[3]{Michael Singer}
\affil[1]{Universit\'e libre de Bruxelles, Belgium}
\affil[2]{University of Nottingham, UK}
\affil[3]{University College London, UK}

\date{\today}

\maketitle

\abstract{Let $(M,g)$ be a compact oriented Einstein 4-manifold. Write $R_+$ for the part of the curvature operator of $g$ which acts on self-dual 2-forms. We prove that if $R_+$ is negative definite then $g$ is locally rigid: any other Einstein metric near to $g$ is isometric to it. This is a chiral generalisation of Koiso's Theorem, which proves local rigidity of Einstein metrics with negative sectional curvature. Our hypotheses are roughly one half of Koiso's. Our proof uses a new variational description of Einstein 4-manifolds, as critical points of the so-called pure connection action $S$. The key step in the proof is that when $R_+<0$, the Hessian of $S$ is strictly positive modulo gauge.}

\tableofcontents
\vfill

\section{Introduction}

\subsection{Statement of the main result}

A Riemannian metric $g$ is called \emph{Einstein} if its Ricci curvature is constant:
\begin{equation}
\Ric(g) = \Lambda g, \quad \Lambda \in \R
\label{Einstein}
\end{equation}
If $g$ is Einstein and $\phi$ is a diffeomorphism, then $\phi^*g$ is also Einstein. When considered modulo diffeomorphisms however equation~\eqref{Einstein} is elliptic of index zero. This means that naively one might expect Einstein metrics to be isolated modulo diffeomorphisms, or ``locally rigid''. It is important to note that there are many situations in which Einstein metrics are not rigid: hyperbolic metrics on surfaces, many K\"ahler--Einstein metrics, holonomy $G_2$ metrics, \ldots. The aim of this article is to give a curvature condition in dimension 4 which ensures this naive expectation of local rigidity is true.

We begin with the definition.
\begin{definition}~
\begin{enumerate}
\item
Given an Einstein metric $g$, let $D_g$ denote the linearisation of the map $g \mapsto \Ric(g) - \Lambda g$ at $g$. An \emph{infinitesimal Einstein deformation of $g$} is a section $h$ of $S^2T^*M$ for which $D_g(h)=0$. 
\item
An Einstein metric $g$ is called \emph{locally rigid} if for any infinitesimal Einstein deformation $h$ of $g$ there is a vector field $v$ such that $h =L_vg$.
\end{enumerate}
\end{definition}

Let $(M,g)$ be a locally rigid compact Einstein metric. It can be shown that if $\widehat{g}$ is another Einstein metric sufficiently close to $g$ (in an appropriate H\"older space, say) then there is a diffeomorphism $\phi$, close to the identity, with $\hat{g} = \phi^*g$. (For a proof see, for example, \cite{Olivier}.) This justifies the use of the term ``local rigidity'' rather than just ``infinitesimal rigidity'' in the preceding definition. 

To state our local rigidity result, we first recall the decomposition of the curvature tensor of an oriented Riemannian 4-manifold $(M^4,g)$. In this dimension the 2-forms split $\Lambda^2 = \Lambda^+ \oplus \Lambda^-$ as the $\pm1$-eigenspaces of the Hodge star. This induces a decomposition of the curvature operator:
\begin{equation}\label{curvature-decomposition}
\Rm 
=
\begin{pmatrix}
R_{+} & C^* \\
C & R_{-}
\end{pmatrix}
\end{equation}
Here $R_{+} \in \End(\Lambda^+)$ and $R_{-} \in \End(\Lambda^-)$ are self-adjoint, whilst $C \in \Hom(\Lambda^+,\Lambda^-)$. This relates to the standard decomposition into the scalar, trace-free Ricci and Weyl curvatures (denoted $R, \Ric^0$ and $W$ respectively) as follows: $R = 4\tr(R_{+}) = 4 \tr(R_{-})$, $\Ric^0 = C$, and $W = R_{+}^0 \oplus R_{-}^0$ (where $E^0$ denotes the trace-free part of $E$ and where we have used a suitable identification of $\Hom(\Lambda^+,\Lambda^-)$ with symmetric trace-free bilinear forms on $TM$). In particular, when $g$ is Einstein, $\Rm = R_{+} \oplus R_{-}$.  

We now state our main result. In the statement the expression $R_+<0$ means that the quadratic form $(R_+(w),w)$ is negative definite.

\begin{theorem}\label{main-result}
Let $g$ be an Einstein metric on a compact orientable 4-manifold, with either $R_{+}<0$ or $R_{-}<0$. Then $g$ is locally rigid. 
\end{theorem}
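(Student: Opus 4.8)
The plan is to use the variational characterisation of Einstein 4-metrics as critical points of the pure connection action $S$. Reversing orientation if necessary we may assume $R_+<0$; since $R=4\tr(R_+)$ this forces the scalar curvature, and hence $\Lambda$, to be negative. The Levi-Civita connection of $g$ then induces a connection $A$ on the rank-three bundle $\Lambda^+$ whose curvature is self-dual (by the Einstein equation) and \emph{definite} (by $R_+<0$); definiteness is precisely the open condition under which $S$ is defined on the space of connections on an $\SO(3)$-bundle, and $g$ being Einstein is equivalent to $A$ being a critical point of $S$. So $A$ lies in the domain of $S$ as a critical point and we may study its Hessian. This is also where the chiral hypothesis buys something: a Bochner argument on $S^2T^*M$ of the kind used by Koiso would need to control the full curvature, whereas deformations of $A$ lie in $\Omega^1(M,\Lambda^+)$ and the second variation of $S$ only ever sees $R_+$.

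Next I would set up the deformation dictionary between the two pictures. The gauge group for $S$ is $\Diff(M)\ltimes\mathcal{G}$ with $\mathcal{G}=\Gamma(\SO(\Lambda^+))$ the bundle gauge group, and its infinitesimal orbit at $A$ is spanned by $\im d_A$ (with $d_A\colon\Omega^0(\Lambda^+)\to\Omega^1(\Lambda^+)$) together with the infinitesimal-diffeomorphism directions. Linearising the assignment $g\mapsto A_g$ (after fixing a reference identification of the bundles $\Lambda^+_g$) sends an infinitesimal Einstein deformation $h$ to a deformation $a\in\Omega^1(M,\Lambda^+)$ of $A$, and one checks: (i) if $h$ is an infinitesimal Einstein deformation then $a\in\ker\mathrm{Hess}_A(S)$; (ii) if $h=L_vg$ then $a$ is a gauge direction; (iii) conversely, if $a$ is a gauge direction then $h=L_vg$, using that a definite connection determines its underlying metric. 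Hence
\[
\{\text{infinitesimal Einstein deformations of }g\}\big/\{L_vg\}\ \hookrightarrow\ \ker\mathrm{Hess}_A(S)\big/\text{gauge},
\]
and local rigidity of $g$ follows once the right-hand side vanishes.

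The heart of the proof is therefore to show that $\mathrm{Hess}_A(S)>0$ modulo gauge whenever $R_+<0$. I would impose a Coulomb-type gauge-fixing ($d_A^*a=0$ together with a transversality condition removing the diffeomorphism directions), expand $S$ to second order at the Einstein connection $A$, and reorganise the result into Weitzenb\"ock/Bochner form. The expected shape is $\mathrm{Hess}_A(S)(a,a)=\|(\text{a first-order expression in }a)\|^2+\langle Q(a),a\rangle$, with $Q$ a zeroth-order operator built from the curvature; unwinding the definition of $S$ and of the metric and conformal normalisation attached to a definite connection, $Q$ should turn out to be a positive multiple of $R_+$ acting on $a$. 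Negative definiteness of $R_+$ then bounds the whole expression below by $c\,\|a\|^2$ on the gauge slice with $c>0$, so the kernel is trivial modulo gauge.

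I expect the main obstacle to be exactly this second-variation computation and the algebra that identifies the curvature term of the gauge-fixed Hessian with $R_+$: one must differentiate the implicitly-defined metric and volume form attached to a definite connection twice and collect terms, and the positivity only becomes visible once everything is assembled. A secondary, more bookkeeping difficulty is handling the two gauge groups at once --- checking that the Coulomb condition defines a genuine slice for all of $\Diff(M)\ltimes\mathcal{G}$, that the linearisation of $g\mapsto A_g$ intertwines the two notions of ``pure gauge'', and that the failure of this map to be onto loses no Einstein deformations. Once the global variational framework of the earlier sections is in place, both of these are local and algebraic.
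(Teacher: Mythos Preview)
Your overall strategy matches the paper's exactly: reduce to $R_+<0$, realise the Levi-Civita connection on $\Lambda^+$ as a critical point $A$ of the pure connection action $S$, prove that $D^2S$ is strictly positive modulo the full gauge group $\G$, and then check that any infinitesimal Einstein deformation $h$ of $g$ produces a deformation $a$ of $A$ lying in $\ker D^2S$, hence pure gauge, hence $h=L_vg$.

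Two corrections are worth making. First, the gauge-fixed Hessian does \emph{not} take the Bochner shape you anticipate. After imposing the algebraic horizontal gauge $\Sigma_i\wedge a^i=0$ (which slices the diffeomorphism directions) together with Coulomb gauge $d_A^*a=0$, the paper obtains (Proposition~\ref{Hessian-positive-in-gauge})
\[
D^2S(a,a)=\int_M\Bigl(|a|^2 - X_{ij}\bigl\langle (d_A^-a)^i,(d_A^-a)^j\bigr\rangle\Bigr)\,\mu_A,
\qquad X=R_+^{-1}.
\]
So the curvature enters as the weight on the \emph{first-order} term, not as a zeroth-order correction $Q$; when $R_+<0$ one has $X<0$, both summands are nonnegative, and the zeroth-order $|a|^2$ already gives strict positivity. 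Your sentence ``$Q$ a positive multiple of $R_+$ \ldots\ bounds the expression below by $c\|a\|^2$'' is internally inconsistent (a positive multiple of a negative operator is negative), and getting this structure right matters because it is precisely why the chiral hypothesis suffices with no competing curvature terms.

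Second, the paper avoids the direct double differentiation of the implicitly defined volume form that you flag as the main obstacle: it computes $D^2S$ by pulling back the Hessian of the \emph{Plebanski} action along $\theta\colon A\mapsto(A,\Sigma_A,\Psi_A)$. Since $\theta(A)$ already satisfies two of the three Euler--Lagrange equations, most of the cross-terms drop out and the computation becomes short. You may find this route considerably easier than differentiating $\mu_A$ twice.
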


To put our main result in context, we first describe one situation in which local rigidity has already been established, namely Koiso's Theorem. 

\begin{theorem}[Koiso \cite{Koiso}]\label{Koiso}
Let $g$ be an Einstein metric on a compact orientable manifold of dimension at least 3,  with negative curvature (i.e.\ all sectional curvatures of $g$ are negative). Then $g$ is locally rigid.
\end{theorem}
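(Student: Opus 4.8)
The plan is to gauge-fix an infinitesimal Einstein deformation into transverse-traceless form, reduce the equation $D_g(h)=0$ to a Weitzenböck identity, and then exploit negative sectional curvature through a pair of Bochner-type integral identities. First note that negative sectional curvature forces $\Ric<0$, hence the Einstein constant satisfies $\Lambda<0$, and by Bochner's theorem $(M,g)$ carries no nonzero Killing field, so the gauge action is clean. Let $\delta$ be the divergence on symmetric $2$-tensors and $\delta^*$ its formal adjoint, so that $\delta^*v^\flat=\tfrac12 L_vg$. The Berger--Ebin splitting $S^2T^*M=\im\delta^*\oplus\ker\delta$ writes any infinitesimal Einstein deformation as $L_vg+h$ with $\delta h=0$; since $\Diff(M)$ preserves the Einstein condition, $L_vg\in\ker D_g$, so $h$ is itself an infinitesimal Einstein deformation. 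It therefore suffices to prove that a divergence-free $h$ with $D_g(h)=0$ vanishes, for then the original deformation is $L_vg$, as local rigidity requires.

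Next I would promote ``divergence-free'' to ``transverse-traceless''. Taking the $g$-trace of $D_g(h)=0$ and using the contracted second Bianchi identity shows $D\Scal_g(h)=0$, which for $\delta h=0$ reduces to $\nabla^*\nabla(\tr h)=\Lambda\,\tr h$; pairing with $\tr h$ gives $\int_M|\nabla\tr h|^2=\Lambda\int_M(\tr h)^2$, whose two sides have opposite signs since $\Lambda<0$, so $\tr h=0$. (This is the only place $\Lambda\neq0$, hence $\dim\geq3$, enters.) On an Einstein manifold the restriction of the linearisation of $\Ric-\Lambda g$ to TT tensors is $\tfrac12(\nabla^*\nabla-2\mathring R)$, the zeroth-order Ricci terms cancelling against $-\Lambda g$, so $D_g(h)=0$ becomes
\[
\nabla^*\nabla h=2\mathring R\,h,\qquad (\mathring R\,h)_{ab}=\sum_{c,d}R_{acbd}\,h^{cd}.
\]
Pairing with $h$ yields the first identity $\int_M|\nabla h|^2=2\int_M\langle\mathring R\,h,h\rangle$.

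Here is the main obstacle: under merely negative sectional curvature the term $\langle\mathring R\,h,h\rangle$ is \emph{not} pointwise sign-definite, so this single Bochner identity does not force $h=0$. The remedy is to wring a second identity out of $\delta h=0$. Setting $T_{abc}=\nabla_a h_{bc}-\nabla_b h_{ac}$, integration by parts combined with the Ricci commutation formula and $\delta h=0$ gives
\[
\int_M|\nabla h|^2-\tfrac12\int_M|T|^2=-\Lambda\int_M|h|^2+\int_M\langle\mathring R\,h,h\rangle.
\]
At the same time, diagonalising $h$ in a $g$-orthonormal frame $\{e_i\}$ with eigenvalues $\lambda_i$ and using the Einstein relation $\sum_{j\neq i}K(e_i,e_j)=\Lambda$ together with $2\lambda_i\lambda_j=\lambda_i^2+\lambda_j^2-(\lambda_i-\lambda_j)^2$ rewrites the curvature term pointwise as
\[
\langle\mathring R\,h,h\rangle=-\Lambda|h|^2+\tfrac12\sum_{i\neq j}K(e_i,e_j)(\lambda_i-\lambda_j)^2.
\]

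Finally I would combine the three relations. Feeding the first Bochner identity and the pointwise formula into the $T$-identity, the terms $\int_M|\nabla h|^2$ and $\Lambda\int_M|h|^2$ cancel identically, leaving
\[
\int_M\sum_{i\neq j}K(e_i,e_j)(\lambda_i-\lambda_j)^2\,\dvol_g=\int_M|T|^2\,\dvol_g.
\]
The right-hand side is $\geq0$ while the left-hand side is $\leq0$, every sectional curvature being negative; hence both vanish. Strict negativity of $K$ then forces $\lambda_i=\lambda_j$ for all $i,j$ at each point, so $h$ is a pointwise multiple of $g$, and tracelessness gives $h=0$. I expect the real difficulty to be precisely this coupling of two Weitzenböck identities: one identity alone leaves the indefinite curvature term uncontrolled, and it is only after producing the second identity from the divergence-free condition and cancelling the $\nabla h$ and $\Lambda$ contributions that a sign-definite, sectional-curvature expression emerges. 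It is the Einstein condition, not merely $\Ric<0$, that makes these cancellations exact.
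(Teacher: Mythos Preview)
The paper does not give its own proof of Koiso's theorem; it is cited as a known result, accompanied only by a one-paragraph sketch in \S1.2 describing Koiso's method as a Weitzenb\"ock argument $\mathscr{D}=d^*d+B$ in which the zeroth-order term $B$ is positive under negative sectional curvature. Your proposal is precisely a fleshed-out version of that sketch---the first-order operator $d$ is $h\mapsto T$, your two integral identities together constitute the relevant Weitzenb\"ock formula relating $d^*d$ to $\nabla^*\nabla$, and your pointwise diagonalisation of $\langle\mathring R\,h,h\rangle$ isolates the sectional-curvature term---and the argument is correct.
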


Theorem~\ref{main-result} is a chiral version of Koiso's Theorem, in dimension 4, with roughly ``one half'' the hypotheses. To see this, we relate the sectional curvatures to $R_{+}$ and $R_{-}$. Let $u,v$ be orthogonal unit tangent vectors, write $u^\flat,v^\flat$ for their metric dual covectors and $\alpha = (u^\flat \wedge v^\flat)^+$ and $\beta = (u^\flat\wedge v^\flat)^-$. When $g$ is Einstein the sectional curvature in the plane spanned by $u$ and $v$ is
\[
\sec(u,v)
=
\left\langle R_{+}(\alpha), \alpha  \right\rangle
+
\left\langle R_{-}(\beta) , \beta \right\rangle
\]
By letting $u,v$ vary, we can make $\alpha$ and $\beta$ take any values in the spheres in $\Lambda^+$ and $\Lambda^-$. Write $\lambda$ for the largest eigenvalue of $R_{+}$ and $\mu$ for the largest eigenvalue of $R_-$. The requirement that an Einstein metric have negative sectional curvatures is then equivalent to $\lambda + \mu <0$, whilst the hypothesis in our main result, Theorem~\ref{main-result}, is that either $\lambda<0$ or $\mu<0$. 

We remark that \emph{global} rigidity results are much more difficult to establish. The only known results are the following: if $(M,g)$ is a compact hyperbolic or complex-hyperbolic 4-manifold then $g$ is the unique Einstein metric on $M$ up to scale and diffeomorphism. In the hyperbolic case this is due to Besson--Courtois--Gallot~\cite{Besson-Courtois-Gallot}, whilst the complex-hyperbolic case is due to LeBrun~\cite{LeBrun}. 

\subsection{Outline of the proof}

Consider first Koiso's Theorem. To prove local rigidity of a negatively curved Einstein metric, Koiso uses a clever Bochner trick. Given an infinitesimal Einstein variation $h$, it is possible to find a vector field $v$ so that $h-L_vg$ solves an equation of the form $\mathscr{D}(h-L_vg) = 0$ where $\mathscr{D}$  is a Laplace type operator. Koiso finds a Weitzenb\"ock formula $\mathscr{D} = d^*d + B$, where $d$ is an appropriate first order operator. Crucially the remainder term $B$ is positive when the sectional curvatures of $g$ are negative. It follows that $h = L_vg$. 

Our proof of local rigidity is different. We use a special variational formulation of 4-dimensional Einstein metrics, due to the second author \cite{Krasnov} and independently, albeit in a weaker form, to the first author \cite{Fine}. A full description of this ``pure connection formalism'' which is accessible to mathematicians appears in \cite{Fine-Panov-Krasnov}. We give a brief summary here and more details in \S\ref{plebanski-to-pure-connection}, choosing a description which is better suited to our purposes here than that given in the above references. 

Let $E \to M^4$ be an $\SO(3)$-vector bundle over $M$. The following definition is crucial to what follows. It first appeared in this form in \cite{Fine-Panov}, but is actually a special case of Weinstein's ``fat connections'' \cite{Weinstein}.

\begin{definition}\label{definite}
An $\SO(3)$-connection $A$ on $E$ is called \emph{definite} if the curvature $F_A(u,v)$ is non-zero whenever $u$ and $v$ are linearly independent tangent vectors on~$M$. 
\end{definition}

Examples can be found from Riemannian geometry. Let $(M,g)$ be an oriented Riemannian 4-manifold, take $E= \Lambda^+$ and $A$ to be the Levi-Civita connection of $g$. Then $A$ is definite precisely when $R_{+}^2 - C^*C$ is a definite endomorphism of $\Lambda^+$ (we use here the notation of the curvature decomposition~\eqref{curvature-decomposition}; the proof that these metrics give definite connections is in \cite{Fine-Panov}). Notice that Einstein metrics have $C=0$ and so when $R_+$ is also non-degenerate (all eigenvalues are non-zero) these metrics give definite connections. Examples include the constant curvature metrics on hyperbolic space or the four-sphere or, more generally, anti-self-dual Einstein metrics with non-zero scalar curvature. 

When $A$ is definite it determines in a canonical way a Riemannian metric $g_A$. The metric tensor $g_A$ is built pointwise out of the curvature $F_A$. The precise formula for $g_A$ is given below in \S\ref{plebanski-to-pure-connection} but it is not important for this introductory discussion. We note however that if $A$ is the Levi-Civita connection on $\Lambda^+$ for a Riemannian metric $g$ as in the previous paragraph, then it is \emph{not} in general true that $g_A=g$. This happens precisely when $g$ is Einstein and $R_+$ is definite, with all eigenvalues nonzero and having the same sign.
 
Write $S(A) = \frac{\Lambda}{2} \int_M \dvol(g_A)$ where $\dvol(g_A)$ is the volume form of the metric $g_A$. The key fact we need is:

\begin{theorem}[\cite{Krasnov}, see also \cite{Fine-Panov-Krasnov}] \label{action-principle}
If $A$ is a critical point for $S$ then $g_A$ is an Einstein metric, with $R_+$ a definite endomorphism of $\Lambda^+$. Conversely, all such Einstein metrics arise this way, with $A$ corresponding to the Levi-Civita connection on~$\Lambda^+$. 
\end{theorem}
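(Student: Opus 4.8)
\medskip
\noindent\textbf{Proof strategy.}
The plan is to compute the first variation of $S$ directly and read off its critical points, rather than going through a Weitzenb\"ock identity. Fix a definite connection $A$ on $E$; in a local orthonormal frame $(e_i)$ of $E$ write $F_A = \sum_i F^i\otimes e_i$ (so each $F^i$ is an ordinary two-form), and consider a variation $A_t = A + ta$ with $a\in\Omega^1(M,\mathfrak{so}(E))$, so that $\tfrac{d}{dt}\big|_{t=0}F_{A_t} = d_A a$. By the construction recalled in \S\ref{plebanski-to-pure-connection}, $g_A$ and hence $\dvol(g_A)$ are built fibrewise and algebraically from the curvature two-forms $F^i$, and $\dvol(g_A)$ is homogeneous of degree two in $F_A$. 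Consequently its variation has the form
\[
\tfrac{d}{dt}\big|_{t=0}\dvol(g_{A_t}) \;=\; \langle\!\langle\, \Psi_A \wedge d_A a\,\rangle\!\rangle
\]
for a unique $\Psi_A\in\Omega^2(M,\mathfrak{so}(E))$, where $\langle\!\langle\,\cdot\wedge\cdot\,\rangle\!\rangle$ denotes the four-form obtained by wedging the form parts and pairing the $\mathfrak{so}(E)$ parts with (a fixed multiple of) the Killing form. Integrating by parts --- using that $M$ is closed and that this pairing is parallel --- gives $\delta S(a) = -\tfrac{\Lambda}{2}\int_M\langle\!\langle\, d_A\Psi_A\wedge a\,\rangle\!\rangle$; since the pairing is fibrewise non-degenerate and $a$ is arbitrary, $A$ is a critical point of $S$ if and only if $d_A\Psi_A = 0$.

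The crux of the proof is the identification of $\Psi_A$. The construction of \S\ref{plebanski-to-pure-connection} produces, together with $g_A$, a canonical bundle isomorphism $E\cong\Lambda^+_{g_A}$ under which $F_A$ corresponds to $M\,\Sigma_{g_A}$, where $\Sigma_{g_A}\in\Omega^2(M,\Lambda^+_{g_A})$ is the tautological self-dual two-form --- the identity endomorphism of $\Lambda^+_{g_A}$, normalised so that $\Sigma^i\wedge\Sigma^j = 2\delta^{ij}\dvol(g_A)$ in an orthonormal frame --- and $M$ is a definite, self-adjoint endomorphism fixed by the construction's normalisation of the scale of $g_A$. Differentiating the explicit formula (schematically, $\dvol(g_A)\propto(\tr\sqrt{X})^2$ with $X^{ij}:=F^i\wedge F^j$, a positive symmetric matrix of four-forms), and using $\partial_X(\tr\sqrt X)^2 = (\tr\sqrt X)(\sqrt X)^{-1}$ together with $X^{ij} = 2(M^2)^{ij}\dvol(g_A)$, one finds that $\Psi_A$ is a \emph{non-zero multiple of $\Sigma_{g_A}$}: the point is that $(\tr\sqrt X)(\sqrt X)^{-1}_{ij}F^j$ is, up to a constant, the canonical orthonormal self-dual frame of $g_A$. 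Hence the Euler--Lagrange equation of $S$ is
\[
d_A\,\Sigma_{g_A} \;=\; 0 .
\]

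It remains to turn this into the stated geometry. First, for any metric $g$ one has $d_{\nabla^{\mathrm{LC}}}\Sigma_g = 0$: in an orthonormal coframe parallel at a point $p$ both $d\Sigma_g^i$ and the Levi-Civita connection one-form vanish at $p$, so the exterior covariant derivative of $\Sigma_g$ vanishes at $p$, hence everywhere. Second, the bundle map $\Omega^1(M,\mathfrak{so}(\Lambda^+_{g_A}))\to\Omega^3(M,\Lambda^+_{g_A})$, $b\mapsto b\wedge\Sigma_{g_A}$, is a fibrewise isomorphism: it is $\SO(4)$-equivariant between bundles of equal rank and one checks directly that it is injective --- this is the algebraic lemma underlying the metric construction in \cite{Fine-Panov}. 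Writing an arbitrary metric connection on $\Lambda^+_{g_A}$ as $\nabla^{\mathrm{LC}}+b$, the equation $d_A\Sigma_{g_A} = 0$ then forces $b = 0$, i.e.\ $A$ is the Levi-Civita connection of $g_A$ acting on $\Lambda^+_{g_A}$. But the curvature of $\nabla^{\mathrm{LC}}$ on $\Lambda^+_{g_A}$, read as a homomorphism $\Lambda^2\to\Lambda^+_{g_A}$, is $R_+$ on $\Lambda^+_{g_A}$ and $C^*$ on $\Lambda^-_{g_A}$ in the notation of \eqref{curvature-decomposition}, whereas $F_A = M\,\Sigma_{g_A}$ vanishes on $\Lambda^-_{g_A}$ and is self-adjoint on $\Lambda^+_{g_A}$; comparing forces $C = \Ric^0(g_A) = 0$ --- so $g_A$ is Einstein --- and $R_+ = M$, a definite endomorphism. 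Conversely, if $g$ is Einstein with $R_+$ definite then $C = 0$, so the Levi-Civita connection $A$ on $E := \Lambda^+_g$ has curvature $F_A = R_+$; since a nonzero decomposable two-form always has non-zero self-dual part, $F_A(u,v) = R_+\big((u^\flat\wedge v^\flat)^+\big)\neq 0$ for linearly independent $u,v$, so $A$ is a definite connection. One checks that $g_A = g$ (the $F^i$ span $\Lambda^+_g$ and $\tr R_+ = \Lambda$ fixes the scale), so that $A = \nabla^{\mathrm{LC}}$ solves $d_A\Sigma_{g_A} = 0$ and is therefore critical.

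The step I expect to be the main obstacle is the identification of $\Psi_A$ as a non-zero multiple of $\Sigma_{g_A}$: it rests on the explicit matrix-square-root formula for $g_A$ and on carefully tracking the construction's normalisation of the scale, and it is also where the orientation and sign conventions that decide the sign of ``$R_+$ definite'' (governed by the sign of $\Lambda$) must be pinned down. The two remaining ingredients --- $d_{\nabla^{\mathrm{LC}}}\Sigma = 0$ and the $\SO(4)$-equivariant isomorphism $b\mapsto b\wedge\Sigma$ --- are standard and purely algebraic.
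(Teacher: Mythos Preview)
Your argument is correct and reaches the same Euler--Lagrange equation $\diff_A\Sigma_A=0$ as the paper, but by a different route. The paper does not vary $S$ directly. Instead it introduces the Plebanski action $S_P(A,\Sigma,\Psi)$, computes its three Euler--Lagrange equations~\eqref{dA=0}--\eqref{dSigma=0}, and then defines $\Sigma_A$ and $\Psi_A$ precisely so that two of them, \eqref{dPsi=0} and~\eqref{dSigma=0}, are satisfied identically along the image of $\theta(A)=(A,\Sigma_A,\Psi_A)$. Since $S=S_P\circ\theta$, the chain rule then says $A$ is critical for $S$ iff $\theta(A)$ is critical for $S_P$ iff the remaining equation $\diff_A\Sigma_A=0$ holds. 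Your approach short-circuits this by differentiating the explicit formula $\mu_A\propto(\tr\sqrt{Q})^2$ and recognising that the gradient $(\tr\sqrt{Q})(\sqrt{Q})^{-1}_{ij}F^j$ is a constant multiple of $\Sigma_i=X_{ij}F^j$; this is exactly what the paper's construction encodes, so your ``main obstacle'' is in fact already handled by the normalisation $\tr Y=\Lambda$ in \S\ref{plebanski-to-pure-connection}. What the paper's detour buys is the Hessian: because $S=S_P\circ\theta$, one gets $D^2S=\theta^*D^2S_P$ for free, and $D^2S_P$ is trivial to write down since $S_P$ is quadratic in its arguments (equation~\eqref{Hessian-Plebanski}). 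Your direct approach is cleaner for Theorem~\ref{action-principle} in isolation, but would force a fresh and substantially messier computation for the second variation. Two minor remarks: your $\Psi_A$ clashes with the paper's $\Psi_A\in\Omega^0(M,S^2_0E)$, and your ``standard'' ingredients $\diff_{\nabla^{\mathrm{LC}}}\Sigma=0$ and the isomorphism $b\mapsto b\wedge\Sigma$ are precisely the content of Lemma~\ref{recognise-LC}.
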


We now return to the proof of Theorem~\ref{main-result}. Let $g$ be  an Einstein metric for which $R_+<0$. (Note that reversing the orientation of $M$ swaps $R_+$ and $R_-$ and so when proving Theorem~\ref{main-result} it suffices to consider $R_+<0$.) The Levi-Civita connection of $g$ on $E=\Lambda^+$ gives a critical point of $S$. The key to proving Theorem~\ref{main-result} is to show that $R_+<0$ ensures that the Hessian of $S$ at $A$ is \emph{strictly positive} modulo gauge. So $A$ is an isolated local minimum of $S$ (again, modulo gauge). We then show that this implies the original Einstein metric $g$ is locally rigid. 

The article is laid out as follows. In \S\ref{plebanski-to-pure-connection} we describe the pure connection formalism, showing how to derive it from the Plebanski formulation of Einstein's equations. We then use this derivation in \S\ref{hessian} to compute the Hessian of the pure connection action. \S\ref{infinitesimal-rigidity} uses this to prove local rigidity of $g$.

\subsection{Remark}

We hope this proof highlights an important difference between the pure connection action $S$ and the more classical variational approach to Einstein metrics, given by the Einstein--Hilbert action $\SEH$. At a critical point, the Hessian of $\SEH$ is always indefinite, having an \emph{infinite} number of both positive and negative eigenvalues. This makes $\SEH$ extremely difficult to analyse from the point of view of the calculus of variations. The pure connection action does not suffer from this problem. Irrespective of curvature assumptions, at a critical point its Hessian is elliptic (modulo gauge), with at most \emph{finitely} many negative eigenvalues \cite{Fine-Panov-Krasnov}. It remains to be seen if techniques from the calculus of variations can be used to greater effect with the pure connection action than has been possible for the Einstein--Hilbert action.

\section{From Plebanski to the pure connection action}
\label{plebanski-to-pure-connection}

In \cite{Plebanski} Plebanski gave an alternative formulation of Einstein 4-manifolds, as critical points of a certain functional, now called the Plebanski action. By ``integrating out'' some of the variables of the Plebanski action, one arrives at the pure connection action of \cite{Krasnov}. This point of view is central to our computation of the Hessian of the pure connection action in \S\ref{hessian}.

Before describing Plebanski's action, we first recall some basic facts about 4-dimensional Riemannian geometry, with an emphasis which, whilst it may seem unusual at first sight, is well suited to our later purposes.

\subsection{Some foundational 4-dimensional facts}

Let $M$ be an oriented 4-manifold. The wedge product on 2-forms $\Lambda^2 \otimes \Lambda^2 \to \Lambda^4$  is a non-degenerate symmetric bilinear form on $\Lambda^2$ with values in $\Lambda^4$. We interpret this as a fibrewise conformal structure on $\Lambda^2$ with signature $(3,3)$. Given any Riemannian metric $g$ on $M$, this inner product makes self-dual and anti-self-dual forms orthogonal, is positive definite on $\Lambda^+_g$ and negative definite on $\Lambda^-_g$. Conversely, given any 3-dimensional sub-bundle $P \subset \Lambda^2$ on which the wedge-product is positive definite, there is a unique conformal class of metrics $[g]$ on $M$ for which $P =\Lambda^+_g$. A Riemannian metric is thus equivalent to the data of a positive definite sub-bundle $P$ together with a choice of volume form. 

The abstract isomorphism class of the vector bundle $\Lambda^+_g$ is independent of $g$; they are all isomorphic to some fixed reference $\SO(3)$-bundle $E \to M$. We fix this choice of $E$ once and for all. For any Riemannian metric $g$ on $M$, there is a (non-unique) vector bundle homomorphism $\Sigma \colon E \to \Lambda^2$ which is an isometry onto $\Lambda^+_g$. In fact it is convenient to fix the scale so that unit-length vectors in $E$ are sent to self-dual 2-forms of length $\sqrt{2}$. The metric $g$ can be recovered from $\Sigma$ by setting $\Lambda^+(g) = \Sigma(E)$ and taking the volume form $\mu = \frac{1}{2}\Sigma(e) \wedge \Sigma(e)$ where $e$ is any unit-length element of $E$. This leads us to the following definition.

\begin{definition}
$\Sigma \in \Omega^2(M,E^*)$ is called \emph{wedge-orthogonal} if when thought of as a vector bundle homomorphism $\Sigma \colon E \to \Lambda^2$ there is a positive 4-form $\mu$ such that for any pair $e_1, e_2 \in E$, we have $\Sigma(e_1)\wedge \Sigma(e_2) = 2 \left\langle  e_1,e_2 \right\rangle \mu$.

Given a wedge-orthogonal $\Sigma$, there is a unique Riemannian metric $g_\Sigma$ on $M$ for which $\Sigma$ takes values in $\Lambda^+_{g_\Sigma}$ and with $\dvol(g_\Sigma)=\mu$. 
\end{definition}

In the Plebanski formulation, we will use wedge-orthogonal 2-forms to parametrise metrics. Next we need to describe the Levi-Civita connection in this picture. For a proof of the following Lemma see Lemmas 2.3 and~2.4 of \cite{Fine}. 

\begin{lemma}\label{recognise-LC}
Let $\Sigma \in \Omega^2(M,E^*)$ be wedge-orthogonal. There is a unique metric connection $A$ in $E$ for which $\diff_A \Sigma = 0$. This connection is the pull-back to $E$ of the Levi-Civita connection in $\Lambda^+_{g_\Sigma}$ via the isomorphism $\Sigma \colon E \to \Lambda^+_{g_\Sigma}$. 
\end{lemma}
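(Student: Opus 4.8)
The statement is the exact analogue, for the ``soldering form'' $\Sigma$, of the fundamental theorem of Riemannian geometry: the condition $\diff_A\Sigma=0$ plays the role of vanishing torsion, and ``metric'' is the reduction of the structure group to $\SO(3)$. Accordingly the plan is to treat existence and uniqueness separately, with the existence argument also identifying the connection with the pull-back of the Levi-Civita connection. The only non-formal ingredient is a fibrewise linear-algebra fact, and that is where I expect the work to be.

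For existence, set $g=g_\Sigma$ and let $\nabla$ be the Levi-Civita connection of $g$. Since $\nabla$ is metric and preserves the orientation it preserves the Hodge star on $\Lambda^2 T^*M$, hence the splitting $\Lambda^2 T^*M=\Lambda^+_g\oplus\Lambda^-_g$, so it restricts to a metric connection on $\Lambda^+_g$. Transport this to a metric connection $A_0$ on $E$ via the bundle isomorphism $\Sigma\colon E\to\Lambda^+_g$, which is an isometry up to the fixed overall scale. By construction $\Sigma$ is a parallel section of $\Hom(E,\Lambda^+_g)$, and because $\Lambda^+_g$ is a $\nabla$-parallel subbundle of $\Lambda^2 T^*M$ it is also parallel as a section of $\Hom(E,\Lambda^2T^*M)\cong\Lambda^2T^*M\otimes E^*$; that is, the full covariant derivative of $\Sigma$ (using $\nabla$ on the form factor and $A_0$ on $E^*$) vanishes. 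Since $\nabla$ is torsion-free, the exterior covariant derivative $\diff_{A_0}$ on $E^*$-valued forms is the alternation $\mathrm{Alt}$ of this full covariant derivative, so $\diff_{A_0}\Sigma=0$. This produces the desired connection and shows it is the pull-back of the Levi-Civita connection on $\Lambda^+_g$; granted uniqueness, any metric $A$ with $\diff_A\Sigma=0$ then equals $A_0$.

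For uniqueness, let $A,A'$ be metric connections with $\diff_A\Sigma=\diff_{A'}\Sigma=0$. Their difference is $a\in\Omega^1(M,\so(E))$ by metricity, and subtracting the two equations gives $a\wedge\Sigma=0$, where $a\wedge\Sigma\in\Omega^3(M,E^*)$ is formed from the action of $\so(E)$ on $E^*$ and the wedge on form components. So it suffices to prove that the bundle map
\[
\so(E)\otimes T^*M\longrightarrow E^*\otimes\Lambda^3 T^*M,\qquad a\longmapsto a\wedge\Sigma,
\]
is injective; as both sides have rank $12$, injectivity is equivalent to its being an isomorphism, and it is enough to check injectivity in one fibre. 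Here I would use the defining identity of wedge-orthogonality, $\Sigma(e_i)\wedge\Sigma(e_j)=2\langle e_i,e_j\rangle\mu$: choose an orthonormal frame $(e_i)$ of $E$, put $\sigma_i=\Sigma(e_i)$, so the $\sigma_i$ are self-dual for $g_\Sigma$ with $\sigma_i\wedge\sigma_j=2\delta_{ij}\,\dvol(g_\Sigma)$, and introduce the endomorphisms $J_i$ of $TM$ with $\iota_X\sigma_i=\langle J_iX,\cdot\rangle$; these satisfy the quaternionic relations $J_iJ_j=-\delta_{ij}\Id+\varepsilon_{ijk}J_k$. Writing $a\wedge\Sigma=0$ out over the frame and applying the Hodge star of $g_\Sigma$, the equation becomes a small linear system relating three tangent vectors $w_1,w_2,w_3$; using two of the three scalar equations to express $w_2,w_3$ through $w_1$ and substituting into the third yields $w_1=-w_1$, hence $a=0$. (Alternatively, the map is $\SO(4)$-equivariant and $T^*M\otimes\so(E)$ decomposes into two inequivalent irreducibles, so by Schur's lemma injectivity reduces to the non-vanishing of two scalars, each verified on a single element.)

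I expect this fibrewise injectivity to be the only genuine obstacle: it is the sole computational step, and it is precisely where the hypothesis that $\Sigma$ is wedge-orthogonal enters. Everything else is formal — the existence half is essentially a tautology once one recalls that the exterior covariant derivative is the alternation of the covariant derivative for a torsion-free connection on $TM$, and the reduction of uniqueness to the algebraic fact is immediate.
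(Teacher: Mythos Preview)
Your argument is correct. The paper does not actually prove this lemma; it defers to Lemmas~2.3 and~2.4 of \cite{Fine}. Your approach---pulling back the Levi-Civita connection on $\Lambda^+_{g_\Sigma}$ for existence, and reducing uniqueness to the fibrewise injectivity of $a\mapsto a\wedge\Sigma$---is the natural one and is essentially what the cited reference does.

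A couple of minor remarks on presentation. In the existence half, the chain of implications is fine: $\Sigma$ is parallel as a map $E\to\Lambda^+_{g_\Sigma}$ by construction of $A_0$, and since $\Lambda^+_{g_\Sigma}\hookrightarrow\Lambda^2$ is $\nabla$-parallel it is also parallel in $\Hom(E,\Lambda^2)$; the torsion-free identity $\diff_{A_0}=\mathrm{Alt}\circ\nabla^{\mathrm{tot}}$ then gives $\diff_{A_0}\Sigma=0$. You might state explicitly that $A_0$ is metric because $\Sigma$ is a constant multiple of an isometry onto $\Lambda^+_{g_\Sigma}$ (this is exactly the wedge-orthogonal condition together with $\dvol(g_\Sigma)=\mu$), but you already gesture at this. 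In the uniqueness half, your computation with the $J_i$ indeed yields $a^1=-a^1$ after substitution, and the Schur's-lemma alternative is also valid: under $\Spin(4)\cong\SU(2)_+\times\SU(2)_-$ one has $T^*M\otimes\so(E)\cong (S^3V_+\oplus V_+)\otimes V_-$, two inequivalent irreducibles, so checking non-vanishing on one element of each suffices. Either route is acceptable.
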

The equation $\diff_A \Sigma =0$ is a torsion-free condition, see \cite{Fine} for more details. We are now in a position to recognise Einstein metrics in this set-up. The following result is the translation of the familiar fact that a Riemannian 4-manifold is Einstein if and only if the Levi-Civita connection on $\Lambda^+$ is a self-dual instanton. 

\begin{lemma}\label{recognise-Einstein}
Let $\Sigma \in \Omega^2(M,E^*)$ be wedge-orthogonal, with associated metric $g_\Sigma$ and let $A$ be the unique metric connection in $E$ with $\diff_A \Sigma = 0$. Then $g_\Sigma$ is Einstein if and only if $F_A$ is self-dual, i.e.\ $A$ is an instanton with respect to $g_\Sigma$.
\end{lemma}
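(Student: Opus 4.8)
The plan is to combine Lemma~\ref{recognise-LC} with the classical fact, due to Atiyah--Hitchin--Singer (see also Singer--Thorpe), that the induced Levi-Civita connection on $\Lambda^+$ of an oriented Riemannian $4$-manifold is a self-dual instanton exactly when the metric is Einstein. Lemma~\ref{recognise-LC} identifies $A$, via the bundle isomorphism $\Sigma\colon E \to \Lambda^+_{g_\Sigma}$, with the Levi-Civita connection on $\Lambda^+_{g_\Sigma}$, so $\Sigma$ intertwines the curvature $F_A \in \Omega^2(M,\End E)$ with the curvature $F^+ \in \Omega^2(M,\End \Lambda^+_{g_\Sigma})$ of the induced connection. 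Since $\Sigma$ acts only on the bundle-coefficient factor and leaves the $\Omega^2(M)$ factor — the one on which the Hodge star of $g_\Sigma$ acts — untouched, $F_A$ is self-dual with respect to $g_\Sigma$ if and only if $F^+$ is. Thus everything reduces to showing $F^+$ is self-dual $\iff g_\Sigma$ is Einstein, which I would prove in the present notation.

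For this I would unwind the standard identifications. The Levi-Civita connection on $TM$ induces one on $\mathfrak{so}(TM)\cong\Lambda^2$ whose curvature, after using $g_\Sigma$ to identify $2$-forms on $M$ with sections of $\Lambda^2 = \Lambda^+ \oplus \Lambda^-$, is precisely the curvature operator $\Rm$ of~\eqref{curvature-decomposition}. The Lie algebra splitting $\mathfrak{so}(4) = \mathfrak{so}(3)_+ \oplus \mathfrak{so}(3)_-$ corresponds to $\Lambda^+ \oplus \Lambda^-$, and the connection on $\Lambda^+$ is the $\mathfrak{so}(3)_+$-component; hence, under $\End_{\mathrm{skew}}(\Lambda^+) \cong \Lambda^+$, the curvature $F^+$ is the composition of $\Rm$ with the orthogonal projection $\Lambda^2 \to \Lambda^+$, i.e.\ in block form $F^+ = (R_+\ \ C^*)\colon \Lambda^+ \oplus \Lambda^- \to \Lambda^+$. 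A $\Lambda^+$-valued $2$-form is self-dual precisely when each of its $\Omega^2(M)$-components (in a local frame of $\Lambda^+$) lies in $\Lambda^+$; equivalently, when the associated map $\Lambda^2 \to \Lambda^+$ annihilates $\Lambda^-$. For $F^+$ this is exactly the vanishing of the block $C^*$. Since $C^*$ is the adjoint of $C$ we have $C^* = 0 \iff C = 0$, and $\Ric^0 = C$ by the curvature decomposition recalled after~\eqref{curvature-decomposition}; so $C = 0$ is equivalent to $g_\Sigma$ being Einstein. This gives both implications.

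The only point needing genuine care is the bookkeeping in the second paragraph: fixing the isomorphism $\mathfrak{so}(3) \cong \Lambda^+$, the conventions relating the curvature operator to the curvature two-form, and the precise meaning of self-duality for a bundle-valued form. None of this is deep, and the block identity $F^+ = (R_+\ \ C^*)$ may instead simply be quoted from Atiyah--Hitchin--Singer or from Besse's book; once it is in hand the conclusion is immediate.
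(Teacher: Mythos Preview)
Your proposal is correct and is exactly the approach the paper takes: the paper gives no separate proof, stating just before the lemma that it ``is the translation of the familiar fact that a Riemannian 4-manifold is Einstein if and only if the Levi-Civita connection on $\Lambda^+$ is a self-dual instanton,'' and relying on Lemma~\ref{recognise-LC} for the identification of $A$ with that connection. Your write-up simply supplies the details behind this classical fact via the block form $F^+ = (R_+\ \ C^*)$, which is entirely in line with the paper's intent.
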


\subsection{The Plebanski action}

The Plebanski action is a function of three variables, $(A,\Sigma,\Psi)$ where $A$ is a metric connection on the bundle $E$, $\Sigma \in \Omega^2(M,E^*)$  and $\Psi \in \Omega^0(M,S^2_0E)$. Note at this stage these variables are arbitrary; in particular, $\Sigma$ is \emph{not} assumed to be wedge-orthogonal and is unrelated to $A$. 

To describe the action, it is perhaps simplest to choose a local orthonormal frame $e_i$ for $E$. Pick an orientation on $E$ and take the local frame also to be oriented (the freedom in choice of orientation will turn out to be slightly subtle---we will return to this point later). We write $e^i$ for the dual frame of $E^*$ and $\widehat{e}^i$ for the generator of positive rotations about $e_i$, giving a local frame of $\so(E)$. We use the convention that repeated indices are summed over $1,2,3$. 

We write the connection $A$ locally as
\begin{equation}
\diff_A(e_i) = \epsilon_{ijk}A^j \otimes e^k
\label{connection-forms}
\end{equation}
for a triple of 1-forms $A^j$. Similarly, we write $\Sigma = \Sigma_i \otimes e^i$ for a triple of 2-forms $\Sigma_i$ and $\Psi = \Psi^{ij} e_i\otimes e_j$ for a symmetric trace-free matrix valued function $\Psi_{ij}$. We will also need the local expression for the curvarure of $A$. Let
\begin{equation}\label{curvature-forms}
F^i = \diff A^i - \frac{1}{2}\epsilon^i_{\phantom{i}jk}A^j\wedge A^k
\end{equation}
These are the curvature 2-forms, meaning that $(\diff_A)^2(e_i) = \epsilon_{ij}^{\phantom{ij}k}F^j\otimes e_k$. 

\begin{definition}
Fix $\Lambda \in \R$. The \emph{Plebanski action} is the functional
\[
S_P(A,\Sigma,\Psi)
=
\int_M \left\{
F^i \wedge \Sigma_i 
- 
\frac{1}{2} \left(\Psi^{ij} +\frac{\Lambda}{3}\delta^{ij}\right)\Sigma_i \wedge \Sigma_j
\right\}
\]
(Note that the integrand does not depend on the choice of local frame $e_i$ and so makes sense globally.)
\end{definition}

Write $\mathscr{P}$ for the open set of triples $(A,\Sigma,\Psi)$ for which $\tr(\Sigma \wedge \Sigma) >0$. 

\begin{theorem}[Plebanski \cite{Plebanski}]
$(A,\Sigma,\Psi) \in \mathscr{P}$ is a critical point of $S_P$ if and only if $\Sigma$ is wedge-orthogonal and the associated metric $g_\Sigma$ is Einstein. 
\end{theorem}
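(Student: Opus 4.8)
The plan is to compute the three partial variations of $S_P$ --- with respect to $\Psi$, to $\Sigma$ and to $A$ --- to read off the corresponding Euler--Lagrange equations, and then to recognise their conjunction, via Lemmas~\ref{recognise-LC} and~\ref{recognise-Einstein}, as the statement that $\Sigma$ is wedge-orthogonal and $g_\Sigma$ is Einstein. One works throughout on the open set $\mathscr{P}$ where $\tr(\Sigma\wedge\Sigma)>0$; this positivity is what will guarantee that the $4$-form $\mu$ produced from the $\Psi$-variation below is a genuine volume form, so that $g_\Sigma$ is a bona fide Riemannian metric (it is also what resolves, in the end, the orientation subtlety flagged around the choice of oriented frame for $E$, pinning down $\Sigma(E)$ as the \emph{self-dual} forms of $g_\Sigma$ rather than the anti-self-dual ones).

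First I would vary $\Psi$. Since $\Psi$ ranges over sections of $S^2_0E$ and only the middle term of $S_P$ involves $\Psi$, the variation $\delta\Psi^{ij}$ is an arbitrary symmetric trace-free matrix of functions and $\delta_\Psi S_P=-\tfrac12\int_M\delta\Psi^{ij}\,\Sigma_i\wedge\Sigma_j$. As $\Sigma_i\wedge\Sigma_j$ is automatically symmetric in $ij$, the Euler--Lagrange equation is that its trace-free part vanishes, that is $\Sigma_i\wedge\Sigma_j=\tfrac13\delta_{ij}\,\Sigma_k\wedge\Sigma^k$. Setting $\mu:=\tfrac16\,\Sigma_k\wedge\Sigma^k$, which is a positive $4$-form on $\mathscr{P}$, this reads $\Sigma_i\wedge\Sigma_j=2\delta_{ij}\mu$, which is exactly the wedge-orthogonality of $\Sigma$; so at any critical point in $\mathscr{P}$ the metric $g_\Sigma$ is defined. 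Next I would vary $\Sigma$, which is unconstrained in $\Omega^2(M,E^*)$: using the symmetry of the coefficient $\Psi^{ij}+\tfrac\Lambda3\delta^{ij}$ and the commutativity of the wedge product of $2$-forms, $\delta_\Sigma S_P=\int_M\bigl(F^i-(\Psi^{ij}+\tfrac\Lambda3\delta^{ij})\Sigma_j\bigr)\wedge\delta\Sigma_i$, giving the algebraic equation $F^i=(\Psi^{ij}+\tfrac\Lambda3\delta^{ij})\Sigma_j$. Finally I would vary $A$ by $A^i\mapsto A^i+ta^i$, so that $F^i\mapsto F^i+t\,\diff_A a^i+O(t^2)$; only the first term of $S_P$ depends on $A$, and since $\diff_A$ is compatible with the metric on $E$ one has $\diff(a^i\wedge\Sigma_i)=\diff_A a^i\wedge\Sigma_i-a^i\wedge\diff_A\Sigma_i$, so Stokes on the closed manifold $M$ turns $\delta_A S_P=\int_M\diff_A a^i\wedge\Sigma_i$ into $\int_M a^i\wedge\diff_A\Sigma_i$ and the Euler--Lagrange equation is $\diff_A\Sigma=0$.

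It then remains to assemble the three equations. If $(A,\Sigma,\Psi)\in\mathscr{P}$ is critical then $\Sigma$ is wedge-orthogonal, $\diff_A\Sigma=0$ identifies $A$ with the connection of Lemma~\ref{recognise-LC} (the pull-back of the Levi-Civita connection on $\Lambda^+_{g_\Sigma}$), and $F^i=(\Psi^{ij}+\tfrac\Lambda3\delta^{ij})\Sigma_j$ expresses $F_A$ in the frame $\{\Sigma_j\}$ of $\Lambda^+_{g_\Sigma}$, so $F_A$ is self-dual and $g_\Sigma$ is Einstein by Lemma~\ref{recognise-Einstein}. Conversely, if $\Sigma$ is wedge-orthogonal with $g_\Sigma$ Einstein, take $A$ with $\diff_A\Sigma=0$; then $F_A$ is self-dual (Lemma~\ref{recognise-Einstein}), hence $F^i=M^{ij}\Sigma_j$ for a uniquely determined matrix of functions $M^{ij}$, and one sets $\Psi^{ij}:=M^{ij}-\tfrac\Lambda3\delta^{ij}$.

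The point I expect to require the most care is checking that this $\Psi$ actually lands in $S^2_0E$. The symmetry of $M^{ij}$ is the self-adjointness of the self-dual curvature operator $R_+$ --- equivalently a consequence of the Bianchi identity $\diff_A F_A=0$ together with $\diff_A\Sigma=0$ --- while $\tr M^{ij}=\Lambda$, so that $\tr\Psi=0$, is precisely the matching of the Einstein constant of $g_\Sigma$ with the parameter $\Lambda$ appearing in $S_P$: here one uses $R=4\tr(R_+)$ together with $R=4\Lambda$, and the normalisation that unit-length vectors in $E$ are sent to self-dual $2$-forms of length $\sqrt2$ is exactly what makes these traces agree. Once $\Psi$ is so defined, all three Euler--Lagrange equations computed above hold, so $(A,\Sigma,\Psi)$ is critical, completing the converse direction.
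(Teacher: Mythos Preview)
Your proof is correct and follows essentially the same approach as the paper: compute the three partial variations of $S_P$, read off the Euler--Lagrange equations $\diff_A\Sigma=0$, wedge-orthogonality of $\Sigma$, and $F^i=(\Psi^{ij}+\tfrac{\Lambda}{3}\delta^{ij})\Sigma_j$, and then invoke Lemmas~\ref{recognise-LC} and~\ref{recognise-Einstein}. You are in fact more careful than the paper about the converse direction---explicitly constructing $\Psi$ and checking it lands in $S^2_0E$, including the observation that $\tr\Psi=0$ forces the Einstein constant of $g_\Sigma$ to match the parameter $\Lambda$ in $S_P$---a point the paper only addresses in the remarks following its proof.
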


\begin{proof}
Let $a\in \Omega^1(M,\so(E))$, $\phi \in \Omega^0(M, S^2E_0)$ and $\sigma \in \Omega^2(M,E^*)$. We compute the partial derivatives of $S_P$ in these directions.
\begin{align}
\frac{\del S_P}{\del A}(a) 
	&= \int_M (\diff_A a)^i\wedge \Sigma_i
	\ = \int_M a^i \wedge (\diff_A \Sigma)_i
	\label{dA}
	\\
\frac{\del S_P}{\del \Psi}(\phi) 
	&= - \frac{1}{2} \int_M \phi^{ij} \Sigma_i\wedge \Sigma_j
	\label{dPsi}
	\\
\frac{\del S_P}{\del \Sigma}(\sigma) 
	&= \int_M \left\{
	F^i \wedge \sigma_i 
	- 
	\left(\Psi^{ij}+\frac{\Lambda}{3} \delta^{ij}\right) \Sigma_i \wedge\sigma_j
	\right\}
	\label{dSigma}
\end{align}

From this it follows that
\begin{align}
\frac{\del S_P}{\del A}=0
	&\iff 
		\diff_A \Sigma = 0
		\label{dA=0}\\
\frac{\del S_P}{\del \Psi}=0
	&\iff 
		\Sigma \text{ is wedge-orthogonal}
		\label{dPsi=0}\\
\frac{\del S_P}{\del \Sigma}=0
	&\iff 
		F^i = \left(\Psi^{ij} + \frac{\Lambda}{3}\delta^{ij}\right)\Sigma_j
		\label{dSigma=0}
\end{align}
By \eqref{dPsi=0}, $\Sigma$ determines a metric $g_\Sigma$, with $\Lambda^+ = \im \Sigma$, the image of $\Sigma$. By~\eqref{dA=0} and Lemma~\ref{recognise-LC} the connection $A$ is the pull-back via $\Sigma$  of the Levi-Civita connection. By~\eqref{dSigma=0}, $F_A$ is self-dual. The result follows from Lemma~\ref{recognise-Einstein}.
\end{proof}

When $(A,\Sigma,\Psi)$ is a critical point of $S_P$, equation~\eqref{dSigma=0} tells us that the Einstein metric $g_\Sigma$ has $R_+ = \Psi + \frac{\Lambda}{3}\Id$. In other words, the self-dual Weyl curvature $W^+$ identifies with $\Psi$ via $\Sigma$ whilst the scalar curvature is given by $R = 4 \Lambda$, which gives us the Einstein constant: $\Ric(g_\Sigma) = \Lambda g_\Sigma$. In fact, there is a small subtlety here. Equations~\eqref{dA=0}, \eqref{dPsi=0} and~\eqref{dSigma=0} show that $(A,-\Sigma, -\Psi)$ is a critical point for the Plebanski action with $\Lambda$ replaced by $-\Lambda$. To resolve this ambiguity and determine which sign we should use, note that we fixed an orientation of the bundle $E$. Meanwhile, the bundle $\Lambda^+$ is naturally oriented. We choose the sign of $\Sigma$ in order that the isomorphism $\Sigma \colon E \to \Lambda^+$ is orientation preserving. With this choice of sign, $W^+=\Psi$ and $\Ric(g_\Sigma) = \Lambda g_\Sigma$. 

\subsection{The pure connection action}

At this stage we must restrict to the case $\Lambda \neq 0$. The idea behind the pure connection formulation is that equations~\eqref{dPsi=0} and~\eqref{dSigma=0} can be used to eliminate $\Psi$ and $\Sigma$ and make the action a function purely of the connection~$A$. To do this, we must begin with a connection $A$ which is \emph{definite} (see Definition~\ref{definite}). Write $\D$ for the set of definite connections and $\mathscr{P} = \{(A,\Sigma,\Psi):\tr(\Sigma \wedge \Sigma)>0\}$ for the domain of the Plebanski action. We will define a map $\theta \colon \D \to \mathscr{P}$ of the form $\theta(A) = (A, \Sigma_A, \Psi_A)$ with the following crucial property: \emph{$A$ is a critical point of $S_P \circ \theta$ if and only if $\theta(A)$ is a critical point of $S_P$}. Once $\theta$ has been found, the pure connection action is defined by $S = S_P \circ \theta$.

We first explain how to define $\Sigma_A$ given $A \in \D$. To satisfy equations~\eqref{dPsi=0} and~\eqref{dSigma=0}, we must find a wedge-orthogonal 2-form which is also in the span of the curvature forms. Since $A$ is definite, 
\[
F^i \wedge F^j = Q^{ij} \mu
\]
for some positive-definite matrix $Q^{ij}$ and positive 4-form $\mu$. Let $Y$ be a square-root of $Q$. When $\Lambda>0$ we take $Y$ to be the positive square root, when $\Lambda<0$ we take $Y$ to be the negative square root.  Scaling $\mu$, scales $Q$ and so $Y$; we fix the choice of scale by requiring that $\tr Y = \Lambda$. We denote the corresponding volume form $\mu_A$ to make clear that it is determined by $A$. To ease the notation we write $X = Y^{-1}$. 
Set 
\begin{equation}
\Sigma _i= X_{ij}F^j
\label{Sigma-from-F}
\end{equation}
The $E^*$-valued 2-form $\Sigma_i \otimes e^i$ is globally defined and we denote it by $\Sigma_A \in \Omega^2(M,E^*)$ to emphasise that it is determined by $A$. The point is that the use of $X$ ensures that $\Sigma_A$ is wedge-orthogonal, so \eqref{dPsi=0} is satisfied. Moreover, the $\Sigma_i$ and the $F_i$ span the same subspace, so it will be possible to satisfy equation~\eqref{dSigma=0} by the correct choice of~$\Psi_A$. Indeed given that $\Sigma = XF_A$, we have $F_A = Y \Sigma_A$ and so defining $\Psi_A$ via 
\begin{equation}
\Psi _A+ \frac{\Lambda}{3} \Id = Y
\label{Psi-from-F}
\end{equation}
is the only way to satisfy~\eqref{dSigma=0}.

We now say a word about the sign ambiguity which arose in our discussion of the Plebanski action. The image of $\Sigma_A$ is a definite sub-bundle of $\Lambda^2$, which we denote by $\Lambda^+_A$. The map $\Sigma_A \colon E \to \Lambda^+_A$ is an isomorphism of oriented bundles and it either preserves or reverses orientation. Recall that $\Sigma_A = XF_A$ and the sign of $X$ agrees with that of $\Lambda$. So switching the sign of $\Lambda$ switches the sign of $\Sigma_A$. Given a definite connection only \emph{one} of the two signs of $\Lambda$ is appropriate to consider, that for which the corresponding $\Sigma_A$ is orientation preserving. This sign can be seen directly from the curvature of $A$ as follows.

\begin{definition}
Given a definite connection $A$ and an orientation on $E$, the curvature can be interpreted as an isomorphism $E \to \Lambda^+_A$ of oriented bundles. In our local frame this is the map $e_i \mapsto F^i$. We call $A$ \emph{positive-definite} if this map is orientation preserving and \emph{negative-definite} if this map is orientation reversing. Note that changing the orientation on $E$ changes the sign of this map and so the sign of the definite connection is well-defined. 
\end{definition}

Positive-definite connections are those for which $\Sigma_A$ is orientation preserving when $\Lambda>0$. Negative-definite connections are those for which $\Sigma_A$ is orientation preserving when $\Lambda<0$. From now on, given $\Lambda$ we consider only definite connections with the same sign. When $g$ is Einstein with $R_+$ definite the Levi-Civita connection on $\Lambda^+_g$ is definite and its sign agrees with that of $R_+$. 

We define $\theta \colon \D \to \mathscr{P}$ by $\theta(A) = (A, \Sigma_A, \Psi_A)$. By construction, $(A,\Sigma_A,\Psi_A)$ automatically solves two of the three Euler--Lagrange equations: $\del S_P/\del \Psi = 0 = \del S_P/\del \Sigma$. It follows that $A$ is a critical point of $S_P \circ \theta$ if and only if $\theta(A)$ is a critical point of $S_P$. The remaining Euler--Lagrange equation $\diff_A\Sigma_A = 0$ is now an equation purely for the definite connection $A$. 

Finally, we can write the action $S_P \circ \theta$ purely in terms of the connection. Using the relationship between $\Sigma_A$, $\Psi_A$ and $F_A$ we see that
\[
S_P(\theta(A)) = \frac{1}{2} \int_M X_{ij} F^i \wedge F^j 
	= \frac{\Lambda}{2} \int_M \mu_A
	= \frac{\Lambda}{2} \vol(M,g_A)
\]
where $g_A = g_{\Sigma_A}$ denotes the Riemannian metric determined by $\Sigma_A$ and hence $A$.  
\begin{definition}[See \cite{Krasnov,Fine-Panov-Krasnov}]  
Let $\Lambda \neq 0$ and $A$ be a definite connection of the same sign as $\Lambda$. We call
\[
S(A) = \frac{\Lambda}{2}\vol(M,g_A)
\]
the \emph{pure connection action} of $A$. 
\end{definition}

We summarise this discussion in the following statement.

\begin{theorem}[See \cite{Krasnov,Fine-Panov-Krasnov}]
Fix $\Lambda \neq 0$ and let $A$ be a definite connection of the same sign. $A$ is a critical point of the pure connection $S$ action if and only 
\[
\diff_A \Sigma_A=0
\]
When this happens, $g_A$ is an Einstein metric with $R_+$ definite and of the same sign as $\Lambda$. Moreover all Einstein metrics with $R_+$ definite arise this way.
\end{theorem}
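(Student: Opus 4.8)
The plan is to assemble the pieces already developed in this section, so that most of the work is done before the proof even begins. The map $\theta\colon\D\to\mathscr{P}$ was built so that, for every definite $A$ of the sign of $\Lambda$, the triple $\theta(A)=(A,\Sigma_A,\Psi_A)$ automatically satisfies two of the three Euler--Lagrange equations of $S_P$, namely $\del S_P/\del\Psi=0$ and $\del S_P/\del\Sigma=0$. Since $\theta$ is the identity on the connection factor, I would argue by the chain rule that $A$ is critical for $S=S_P\circ\theta$ precisely when the remaining equation $\del S_P/\del A=0$ holds at $\theta(A)$, which by \eqref{dA=0} is exactly $\diff_A\Sigma_A=0$; this gives the first assertion. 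For the second, assuming $\diff_A\Sigma_A=0$, the triple $\theta(A)\in\mathscr{P}$ is then a critical point of $S_P$, so Plebanski's theorem applies: $\Sigma_A$ is wedge-orthogonal and $g_A=g_{\Sigma_A}$ is Einstein. I would then read off from \eqref{Psi-from-F} that its self-dual curvature is $R_+=\Psi_A+\tfrac{\Lambda}{3}\Id=Y$, the square root of the positive-definite matrix $Q$ taken with the sign of $\Lambda$, hence a definite endomorphism of $\Lambda^+_{g_A}$ of that sign; and the normalisation $\tr Y=\Lambda$ together with $R=4\tr R_+$ would pin down $\Ric(g_A)=\Lambda g_A$.

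The substantive part is the final sentence: that every Einstein metric $g$ with $R_+$ definite arises this way. Here I would set $\Lambda=R/4$ (the Einstein constant), which has the sign of $R_+$, choose an orientation-preserving bundle isometry $\Sigma\colon E\to\Lambda^+_g$ with the scaling normalisation of \S\ref{plebanski-to-pure-connection} so that $\Sigma$ is wedge-orthogonal with $g_\Sigma=g$, and take $A$ to be the unique metric connection with $\diff_A\Sigma=0$ from Lemma~\ref{recognise-LC} --- the pull-back of the Levi-Civita connection on $\Lambda^+_g$. By Lemma~\ref{recognise-Einstein} the curvature $F_A$ is self-dual, and in a local orthonormal frame of $E$ it takes the form $F^i=(R_+)^{ij}\Sigma_j$ (the instanton equation of an Einstein metric, with curvature endomorphism $R_+$; compare \eqref{dSigma=0} and the discussion following the proof of Plebanski's theorem). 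A short computation then shows that $F^i\wedge F^j$ is a positive constant times $(R_+^2)^{ij}$ times the volume form, so $Q$ is a positive multiple of $R_+^2$; in particular $Q$ is positive definite, so $A$ is definite. Choosing the square root of $Q$ with the sign of $\Lambda$ and rescaling the volume form to enforce $\tr Y=\Lambda=\tr R_+$ should force $Y=R_+$, using that all eigenvalues of $R_+$ share the sign of $\Lambda$; then $X=R_+^{-1}$ and $(\Sigma_A)_i=X_{ij}F^j=(R_+^{-1})_{ij}(R_+)^{jk}\Sigma_k=\Sigma_i$, i.e.\ $\Sigma_A=\Sigma$. This would give $\diff_A\Sigma_A=\diff_A\Sigma=0$, so $A$ is critical with $g_A=g_\Sigma=g$, and since $\Sigma_A=\Sigma$ is orientation preserving, $A$ is a definite connection of the sign of $\Lambda$.

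The main obstacle I anticipate is precisely this identity $\Sigma_A=\Sigma$ in the converse. It rests on three things, all implicit in the material of \S\ref{plebanski-to-pure-connection}: the explicit form $F^i=(R_+)^{ij}\Sigma_j$ of the curvature of the Levi-Civita connection on $\Lambda^+$ of an Einstein metric; the mutual compatibility of the two scaling conventions in play --- $\tr Y=\Lambda$ on the connection side and the $\sqrt{2}$-normalisation of $\Sigma$ on the metric side, which together yield $\tr R_+=R/4=\Lambda$; and the chain of signs ``sign of $R_+$'' $=$ ``sign of $\Lambda$'' $=$ ``sign of the definite connection $A$''. With these three points checked, the remaining assertions all follow formally from the factorisation $S=S_P\circ\theta$ and Plebanski's theorem.
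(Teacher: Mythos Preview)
Your proposal is correct and follows essentially the same approach as the paper, which in fact presents this theorem as a summary of the preceding discussion rather than giving a separate proof. Your argument for the converse direction (that every Einstein metric with $R_+$ definite arises from a critical definite connection, via the identification $\Sigma_A=\Sigma$ and $Y=R_+$) is more explicit than what the paper spells out, but it is exactly the computation the paper's discussion is pointing to.
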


We close this section with some remarks about the existence of definite connections. In \cite{Fine-Panov} it is shown that if a definite connection exists on $M$, then $2\chi(M) + 3\tau(M)>0$. This is ``one half'' of the Hitchin--Thorpe inequality, a necessary condition for the existence of an Einstein metric \cite{Hitchin,Thorpe}. No other obstructions to the existence of definite connections are known. The only known compact examples of \emph{positive} definite connections are on $S^4$ and $\C\P^2$, given by deforming the corresponding Levi-Civita connections of the standard Einstein metrics there. Meanwhile, there are negative definite connections on manifolds on which it is currently unknown if there are Einstein metrics. See \cite{Fine-Panov,Fine-Panov-Krasnov} for examples of definite connections and open questions about them. 

\subsection{The gauge group}

Let $\G \colon E \to E$ denote the group of all maps which send fibres of $E$ to fibres by linear isometries (i.e.\ bundle isomorphisms). We call $\G$ the \emph{gauge group}. An element $\gamma \in \G$ covers a diffeomorphism on the base $M$ and this sets up an exact sequence
\[
1 \to \G_0 \to \G \to \Diff(M) \to 1
\]
where $\G_0$ is the subgroup of gauge transformations which cover the identity on $M$, the ``usual'' gauge group one sees in traditional gauge theory. 

$\G$ acts on the space $\A$ of all connections in $E$, by pull-back. It preserves the open set $\D \subset \A$ of definite connections. The map $A \mapsto g_A$, sending a definite connection to its corresponding metric, is equivariant with respect to the action of $\G$ on $\D$ and the action of $\Diff(M)$ on metrics. It follows that the pure connection action $S \colon \D \to \R$ is $\G$-invariant. 

Given $\eta \in \Lie \G$ we write $L_\eta A$ for the infinitesimal action of $\eta$ at $A$. To compute $L_\eta A$ it is convenient to move to the principal bundle description. $P \to M$ denotes the principal $\SO(3)$-bundle of frames of $E$. $\G$ is then the subgroup of diffeomorphisms of $P$ which commute with the principal $\SO(3)$-action.The Lie algebra $\Lie(\G)$ is a subalgebra of vector fields on $P$. A connection $A$ is an $\SO(3)$-equivariant connection 1-form with values in $\so(3)$. In this picture, the infinitesimal action $L_\eta A$ of $\eta \in \Lie(\G)$ at $A$ is simply the usual Lie derivative. 

We now give a useful formula for $L_\eta A$. Use $A$ to split the vector field $\eta$ into vertical and horizontal parts. When $\eta \in \Lie(\G)$, the vertical part corresponds to a section $\xi \in \Omega^0(M,\so(E))$;  meanwhile, the horizontal part is the $A$-horizontal lift to $P$ of a vector field $v$ on $M$. By Cartan's formula, 
\begin{equation}
L_\eta A
	= 
		\diff (\iota_\eta A) + \iota_\eta (\diff A)
	=
		\diff_A(\xi) + \iota_v F_A
\label{infinitesimal-action}
\end{equation}
(We consider here a right action of $\G$ on $\A$, $A \mapsto \gamma^*A$ whereas usually in gauge theory one considers the corresponding left action. This accounts for the difference in sign of the term $\diff_A(\xi)$ here when compared with standard gauge theory literature.)

\section{The Hessian of the pure connection action}
\label{hessian}

In this section we compute the Hessian of the pure connection action $S$. Recall that $\D$ denotes the set of definite connections (an open set in the affine space of all connections). Given $A \in \D$ which is a critical point of $S$, the Hessian $D^2S$ of $S$ at $A$ is a symmetric bilinear form on $T_A \D = \Omega^1(M, \so(E))$. 

\begin{theorem}\label{hessian-positive}
Let $A$ be a critical point of $S$ corresponding to an Einstein metric for which $R_+<0$. Then $D^2S(a,a) \geq 0$ with equality if and only if $a = L_\eta A$ for some $\eta \in \Lie(\G)$. 
\end{theorem}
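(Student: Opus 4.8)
The plan is to compute $D^2 S$ explicitly by differentiating the chain of constructions $A \mapsto (\Sigma_A, \Psi_A) \mapsto S_P(A,\Sigma_A,\Psi_A)$ twice, then to identify the resulting quadratic form and show positivity. Since $\theta(A)=(A,\Sigma_A,\Psi_A)$ is built so that $\partial S_P/\partial\Sigma = \partial S_P/\partial\Psi = 0$ at $\theta(A)$ for every $A\in\D$ (not just critical ones), the first variation is $DS(a) = \frac{\partial S_P}{\partial A}(a) = \int_M a^i\wedge (\diff_A\Sigma_A)_i$, and differentiating once more at a critical point (where $\diff_A\Sigma_A=0$) kills all terms in which the extra derivative lands on the factor $(\diff_A\Sigma_A)_i$ except through its own variation. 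So I expect
\[
D^2 S(a,a) = \int_M a^i \wedge (\diff_A \dot\Sigma)_i + \int_M a^i \wedge [a,\Sigma_A]_i,
\]
where $\dot\Sigma = \frac{\dd}{\dd t}\Sigma_{A+ta}\big|_{t=0}$, the second term coming from the $A$-dependence of $\diff_A$. The first task is therefore to get a usable formula for $\dot\Sigma$ in terms of $a$, by differentiating the defining relations $\Sigma_i = X_{ij}F^j$, $F^i\wedge F^j = Q^{ij}\mu_A$, $\tr Y = \Lambda$. Writing $\dot F^i = \diff_A a^i$ (the linearised curvature), one differentiates $Q$, hence $Y=Q^{1/2}$, hence $X=Y^{-1}$, and collects terms; the normalisation $\tr\dot Y = 0$ fixes the scaling ambiguity in $\dot\mu_A$.

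The second step is to rewrite everything at the critical point in Riemannian terms. At a critical point $A$ is the Levi-Civita connection on $\Lambda^+_g$, $\Sigma_A$ is (a multiple of) the tautological identification $E\cong\Lambda^+$, and $Y = \Psi_A + \frac{\Lambda}{3}\Id$ identifies with $R_+$; definiteness and the sign convention give $R_+<0\iff\Lambda<0$, $Y<0$. Under the isomorphism $\Sigma\colon E\to\Lambda^+$ a connection variation $a\in\Omega^1(M,\so(E))\cong\Omega^1(M,\Lambda^+)$ becomes a $\Lambda^+$-valued 1-form, i.e. an element of $\Omega^1\otimes\Lambda^+$, and the curvature map $a\mapsto \diff_A a$ lands in $\Omega^2\otimes\Lambda^+$; projecting onto $\Lambda^+\otimes\Lambda^+$ and $\Lambda^-\otimes\Lambda^+$ and then onto the trace and trace-free parts of $\Lambda^+\otimes\Lambda^+$ decomposes $a$ (modulo $\diff_A\xi$) into pieces on which $D^2 S$ should act diagonally with coefficients built from $Y^{-1}=R_+^{-1}$ and $\Lambda$. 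I expect the end formula to look schematically like $D^2 S(a,a) = \int_M \langle R_+^{-1}(P_+ \dd_A a), P_+\dd_A a\rangle + (\text{a manifestly nonnegative term in } P_-\dd_A a)$, up to overall sign conventions, where $P_\pm$ are the projections $\Omega^2\to\Lambda^\pm$ applied in the form slot; since $R_+<0$ means $R_+^{-1}<0$ and $\Lambda<0$, both pieces are $\ge 0$.

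The third and final step is the equality case. If $D^2S(a,a)=0$ then both pieces vanish: $P_+\dd_A a = 0$ and the $\Lambda^-$-piece vanishes, i.e. $\dd_A a$ is constrained to a very small space. I would then run the standard Bianchi/elliptic argument: the operator $a\mapsto (\text{these components of }\dd_A a)$ together with a gauge-fixing term $\dd_A^* a$ is elliptic (this is essentially the statement quoted from \cite{Fine-Panov-Krasnov} that the Hessian is elliptic modulo gauge), so its kernel is finite-dimensional and consists of the infinitesimal gauge orbit directions $a = L_\eta A = \dd_A\xi + \iota_v F_A$. Concretely: decompose $a = \dd_A\xi + a'$ with $a'$ in the slice $\dd_A^* a' = 0$; positivity of $D^2 S$ on the complement of $\ker$ forces $a'$ to lie in the kernel of the relevant symbol, and one checks by hand that this kernel is exactly $\{\iota_v F_A : v\in\Gamma(TM)\}$ (using definiteness of $F_A$ to see the map $v\mapsto\iota_v F_A$ is injective and that its image is characterised by the vanishing of precisely the components appearing in $D^2 S$). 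Hence $a = \dd_A\xi + \iota_v F_A = L_\eta A$.

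The main obstacle I anticipate is Step 1–2: correctly tracking the variation $\dot\Sigma$ through the square-root $Y = Q^{1/2}$ and the trace-normalisation, and then massaging the resulting quadratic form into the clean Riemannian shape where negativity of $R_+$ is visibly what makes it nonnegative. The derivative of a matrix square root does not commute with the matrix in general, so $\dot Y$ is only given implicitly by $\dot Y\, Y + Y\,\dot Y = \dot Q$; at the critical point, however, $Y$ acts as $R_+$ on $\Lambda^+$ and one can diagonalise, which should make the bookkeeping manageable but delicate. The sign-chasing between the $E$-orientation convention, the sign of $\Lambda$, and the sign of $R_+$ is the other place where care is needed, though it is conceptually routine once set up.
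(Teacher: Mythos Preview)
Your overall plan --- pull back the Plebanski Hessian via $\theta$, express it in terms of $a$ alone, gauge-fix, read off positivity --- is exactly the paper's. But two concrete points are off and would block the argument as written.

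First, your predicted shape of the Hessian is wrong. After full gauge fixing the paper obtains
\[
D^2S(a,a)=\int_M\Bigl(|a|^2 - X_{ij}\bigl\langle (\diff_A^- a)^i,(\diff_A^- a)^j\bigr\rangle\Bigr)\mu_A,
\]
so the $X=R_+^{-1}$ factor sits on the \emph{anti}-self-dual part of $\diff_A a$, not the self-dual part, and the other piece is a zeroth-order $|a|^2$, not a $P_+\diff_A a$ term. Before gauge fixing the zeroth-order piece is $-\epsilon_{ij}{}^{k}a^i\wedge a^j\wedge\Sigma_k$, which has no sign in general; and the first-order piece is $X_{ij}\bigl((\diff_A a)^i-\phi^{ik}\Sigma_k\bigr)\wedge\bigl((\diff_A a)^j-\phi^{jl}\Sigma_l\bigr)$ with $\phi$ determined by $S^2_0(X\phi)=S^2_0(XN)$, $(\diff_A^+a)^i=N^{ij}\Sigma_j$. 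Neither term is manifestly signed.

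Second, and this is the real gap, your gauge-fixing step uses only the Coulomb condition $\diff_A^*a'=0$, i.e.\ only the subgroup $\G_0$. The paper needs the \emph{full} gauge group $\G$, in particular the diffeomorphism directions $\iota_v F_A$, and imposes an additional algebraic ``horizontal gauge'' $\Sigma_i\wedge a^i=0$ (equivalently $J_ia^i=0$). It is precisely this condition that turns $-\epsilon_{ij}{}^{k}a^i\wedge a^j\wedge\Sigma_k$ into $|a|^2\mu_A$, and together with Coulomb gauge it forces $N\in S^2_0$, whence $\phi=N$ and $(\diff_A a)^i-\phi^{ij}\Sigma_j=(\diff_A^- a)^i$. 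Your plan to sit in the Coulomb slice and then argue that the residual null-space is exactly $\{\iota_vF_A\}$ is replacing this step, but you give no mechanism for it; the paper instead proves that the fully gauge-fixed subspace is a genuine complement to the whole orbit by analysing the elliptic operator $\diff_A^*\Pi\,\diff_A$ on $\Omega^0(\so(E))$, where $\Pi$ is the (non-orthogonal) projection onto horizontal gauge along $\im(v\mapsto\iota_vF_A)$. That operator is shown to be elliptic, of index zero (via a homotopy making $\Pi$ self-adjoint), and injective (using $\Ric<0$ to kill Killing fields). Without the horizontal gauge condition and this operator, your Step~3 does not go through.

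A minor remark: the square-root worry you flag is avoidable. Rather than differentiating $Y=Q^{1/2}$, differentiate the implicit relation $F^i=Y^{ij}\Sigma_j$ together with the wedge-orthogonality of $\Sigma$; this yields $\sigma_i=X_{ij}\bigl((\diff_A a)^j-\phi^{jk}\Sigma_k\bigr)$ directly, with $\phi$ determined algebraically as above, and no $\dot Y$ ever appears.
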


It turns out to be easiest to compute the Hessian of $S$ by passing via Plebanski action. Recall that $\mathscr{P} = \{ (A,\Sigma,\Psi) : \tr (\Sigma \wedge \Sigma) >0\}$ denotes the domain of Plebanski action. In the previous section we introduced a map $\theta \colon \D \to \mathscr{P}$ given by $\theta(A)= (A,\Sigma_A, \Psi_A)$ where $\Sigma_A$ and $\Psi_A$ are defined by~\eqref{Sigma-from-F} and~\eqref{Psi-from-F}. The pure connection action is the pull-back of Plebanski action by $\theta$. It follows that at a critical point $A$, the Hessian of the pure connection action is the pull-back by $\theta$ of the Hessian of $S_P$ at $\theta(A)$. To compute $D^2S$ we will begin with the Hessian of Plebanski action at an arbitrary critical point $(A, \Sigma, \Psi)$ (not necessarily of the form $\theta(A)$ for a definite connection $A$). Then we pull this back to $\D$ via $\theta$. To complete the proof of Theorem~\ref{hessian-positive}, we will also bring gauge fixing into the picture. 

\subsection{The Hessian before gauge fixing}
\label{before-gauge}

We begin with the Hessian of the Plebanski action. Fix a critical point $(A,\Sigma,\Psi)$ of $S_P$ and choose a tangent direction $t = (a,\sigma,\phi)$. Write $D^2S_P(t,t)$ for the Hessian of $S_P$ in this direction. Differentiating the equations~\eqref{dA}, \eqref{dPsi} and~\eqref{dSigma} again with respect to $(a,\sigma,\phi)$ (and recalling our conventions \eqref{connection-forms} and \eqref{curvature-forms}) we see that $D^2S_P(t,t)$ is equal to 
\begin{equation}\label{Hessian-Plebanski}
  \int_M \left\{
- \epsilon_{ij}^{\phantom{ij}k}a^i\wedge a^j \wedge \Sigma_k
+
2 \left( (\diff _A a)^i - \phi^{ij} \Sigma_j\right) \wedge \sigma_i
-
\left(\Psi^{ij} + \frac{\Lambda}{3} \delta^{ij} \right) \sigma_i \wedge \sigma_j
\right\}
\end{equation}

We now consider the Hessian of the pure connection action $S$. Let $A$ be a definite connection which is a critical point of $S$ so that $\theta(A) = (A, \Sigma_A, \Psi_A)$ is a critical point of $S_P$. Given an infinitesimal change $a$ of $A$, the Hessians of $S$ and $S_P$ are related by
\[
D^2S(a,a) = D^2S_P(\theta_*(a), \theta_*(a))
\]
Write $\theta_*(a) = (a,\sigma,\phi)$ where $\sigma$ and $\phi$ denote the corresponding infinitesimal changes in $\Sigma_A$ and $\Psi_A$ respectively. To proceed we will need the precise dependence of $\sigma$ and $\phi$ on $a$. To obtain this note that the tangent vector $(a,\sigma, \phi)$  preserves the conditions~\eqref{dPsi=0} and~\eqref{dSigma=0}. Differentiating \eqref{dPsi=0} we see that
\begin{equation}
\sigma_i \wedge \Sigma_j + \Sigma_i \wedge \sigma_j
-
\frac{2}{3} (\sigma_k \wedge \Sigma_k) \delta_{ij}
=0
\label{double-sigma}
\end{equation}
Differentiating \eqref{dSigma=0} gives
\begin{equation}\label{pre-gives-sigma}
(\diff_A a)^i - \phi^{ij}\Sigma_j - Y^{ij}\sigma_j
=
0
\end{equation}
(Recall that $Y = \Psi_A + \frac{\Lambda}{3} \Id$.) Rearranging this gives
\begin{equation}\label{give-sigma}
\sigma_i = X_{ij}\left(
(\diff_A a)^j - \phi^{jk}\Sigma_k
\right)
\end{equation}
(Recall that $X = Y^{-1}$.) From here we can solve for $\phi$ in terms of $\diff_Aa$ via \eqref{double-sigma}. We do this as follows.

Using the local frame $\Sigma^j$ for $\Lambda^+$ we write the self-dual part of $\sigma_i$ as $\sigma^+_i = Z_{i}^{\phantom{i}j}\Sigma_j$ for some $3\times 3$-matrix valued function $Z$. Similarly, we put $(\diff_A^+a)^i = N^{ij}\Sigma_j$, so~\eqref{give-sigma} reads
\begin{equation}\label{Z}
Z_{i}^{\phantom{i}j} = X_{ik}(N^{kj} - \phi^{kj})
\end{equation}
Now an arbitrary $3\times 3$-matrix has three irreducible parts: the symmetric trace-free component, the skew component and the trace component. Equation~\eqref{double-sigma} says that the symmetric trace-free part of $Z$ is zero. This uniquely determines $\phi^{ij}$ in terms of $N^{ij}$ and $X_{ij}$. To see this write $L \colon S^2_0E \to S^2_0E$ for the linear bundle map $L(\phi) = S^2_0(X\phi)$ where $S^2_0$ denotes the projection operator sending a matrix to the trace-free symmetric part. Then $\phi$ must solve $L(\phi) = S^2_0(XN)$. This has a unique solution $\phi \in S^2_0E$ provided $L$ is injective. Suppose then that $L(\psi) = 0$ for some $\psi \in S^2_0E$ or, in other words, that 
\[
\psi + X\psi X^{-1} = f X^{-1}
\]
for some function $f$. Taking the trace and using the fact that $\tr X^{-1} = \Lambda$ we see that $f=0$. On the one hand the eigenvalues of the similar matrices $\psi$ and $X\psi X^{-1}$ are equal, on the other hand the fact that $\psi = -X\psi X^{-1}$ shows the eigenvalues are opposite. The only conclusion is that $\psi =0$ and so $L$ is an isomorphism as required. 

We can now give the formula for the Hessian of the pure connection action.

\begin{lemma}\label{Hessian-before-gauge-fix}
Let $A \in \D$ be a critical point of $S$ and let $a \in T_A \D$. The Hessian of $S$ in the direction $a$ is given by
\begin{equation*}
D^2S(a,a) 
  =
  \int_M \left[ 
  -\epsilon_{ij}^{\phantom{ij}k}a^i \wedge a^j \wedge \Sigma_k
  +
  X_{ij} 
  \left((\diff_A a)^i - \phi^{ik}\Sigma_k\right)
  \wedge
  \left((\diff_A a)^j - \phi^{jk}\Sigma_k\right)
\right]
\end{equation*}
where $\phi\in S^2_0E$ is the unique solution to $S^2_0(X\phi) = S^2_0(XN)$ where $d_A^+a = N\Sigma$. 
\end{lemma}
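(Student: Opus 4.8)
The plan is to obtain the formula by substituting the tangent vector $\theta_*(a)$ into the Hessian~\eqref{Hessian-Plebanski} of the Plebanski action and simplifying. Since $S = S_P\circ\theta$ and, at a critical point $A$ of $S$, the point $\theta(A)$ is a critical point of $S_P$ (two of its three Euler--Lagrange equations hold by construction), the chain rule for second derivatives gives $D^2S(a,a) = D^2S_P(\theta_*(a),\theta_*(a))$ with $\theta_*(a) = (a,\sigma,\phi)$, as was already noted above. The two facts about $\sigma$ and $\phi$ that I need have also been assembled: first, $\sigma$ is expressed through $\diff_A a$ and $\phi$ by~\eqref{give-sigma}, namely $\sigma_i = X_{ij}((\diff_A a)^j - \phi^{jk}\Sigma_k)$; and second, $\phi\in S^2_0E$ is pinned down as the unique solution of $S^2_0(X\phi) = S^2_0(XN)$, where $\diff_A^+ a = N\Sigma$, its existence and uniqueness being exactly the injectivity of the bundle map $L$ established in the paragraph preceding the lemma.

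With this in place I would abbreviate $\omega_i := (\diff_A a)^i - \phi^{ik}\Sigma_k$, so that $\sigma_i = X_{ij}\omega_j$, and treat the three terms of~\eqref{Hessian-Plebanski} separately. The first term $-\epsilon_{ij}^{\phantom{ij}k}a^i\wedge a^j\wedge\Sigma_k$ involves neither $\sigma$ nor $\phi$ and is carried over unchanged. The cross term becomes $2((\diff_A a)^i - \phi^{ij}\Sigma_j)\wedge\sigma_i = 2\,\omega_i\wedge\sigma_i = 2\,X_{ij}\,\omega_i\wedge\omega_j$, using the symmetry of the matrix $X$ together with the symmetry of the $\Lambda^4$-valued wedge pairing on $2$-forms. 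For the last term I would invoke~\eqref{Psi-from-F}, which at $\theta(A)$ reads $\Psi^{ij}+\tfrac{\Lambda}{3}\delta^{ij} = Y^{ij}$ with $Y = X^{-1}$, so that $-(\Psi^{ij}+\tfrac{\Lambda}{3}\delta^{ij})\sigma_i\wedge\sigma_j = -Y^{ij}X_{ik}X_{jl}\,\omega_k\wedge\omega_l = -X_{kl}\,\omega_k\wedge\omega_l$, the contraction $\sum_{i,j}Y^{ij}X_{ik}X_{jl} = X_{kl}$ being a consequence of $X$ and $Y$ being symmetric and mutually inverse.

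Adding the three contributions, the $\sigma$-dependent part collapses to $2\,X_{ij}\,\omega_i\wedge\omega_j - X_{ij}\,\omega_i\wedge\omega_j = X_{ij}\,\omega_i\wedge\omega_j$, which is precisely the asserted formula. The argument is pointwise and purely algebraic once~\eqref{give-sigma} and the characterization of $\phi$ are available, so there is no analytic subtlety. The only step that demands care is the index bookkeeping and the repeated appeal to the symmetry of $X$ and $Y$: this is exactly what makes the cross term and the $\sigma\wedge\sigma$ term combine with the single coefficient $+1$ rather than cancelling or doubling.
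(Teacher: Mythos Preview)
Your argument is correct and follows exactly the route of the paper: substitute~\eqref{give-sigma} into the Plebanski Hessian~\eqref{Hessian-Plebanski} and use that $X$ and $Y=\Psi+\tfrac{\Lambda}{3}\Id$ are symmetric inverses so that the cross term $2X\omega\wedge\omega$ and the $-Y\sigma\wedge\sigma=-X\omega\wedge\omega$ term combine to a single $X\omega\wedge\omega$. Your write-up is in fact more explicit about the index contractions than the paper's own proof, which merely invokes $XY=\Id$ and leaves the $2-1=1$ bookkeeping to the reader.
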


\begin{proof}
We have seen that $\theta_*(a) = (a,\sigma,\phi)$ where $\sigma$ and $\phi$ satisfy \eqref{give-sigma}. We now substitute \eqref{give-sigma} into \eqref{Hessian-Plebanski} and use the fact that 
\[
X_{ij}\left(\Psi^{ij} + \frac{\Lambda}{3} \delta^{ij}\right) = X_{ij}Y^{ij} = \delta_{i}^j
\]
This gives the claimed expression for $D^2S(a,a)$. The condition on $\phi$ is the content of the discussion after~\eqref{Z}   
\end{proof}

\subsection{The Hessian after gauge fixing}\label{after-gauge}

In this section we will explain how the formula of Lemma~\ref{Hessian-before-gauge-fix} for $D^2S(a,a)$ simplifies when we impose additional gauge fixing conditions on $a$. Before discussing this however we need a short digression. Given a wedge-orthogonal $\Sigma$, we can define a triple of almost complex structures $J_i$ by ``raising an index'' on $\Sigma_i$ with $g_\Sigma$. (It is for this reason that that the metric $g_\Sigma$ is scaled so that $|\Sigma_i|^2=2$.) Equivalently, one can define the action of $J_i$ on 1-forms via the equation
\begin{equation}\label{almost-complex}
	J_i(\alpha) = * (\Sigma_i \wedge \alpha)
\end{equation}
One can check that the $J_i$ satisfy the quaternion relations:
\begin{equation}\label{quaternion}
J_i J_j = \epsilon_{ij}^{\phantom{ij}k} J_k - \delta_{ij}
\end{equation}
At this point we use crucially that $\Sigma \colon E \to \Lambda^+$ is orientation-preserving; if it were orientation-reversing then $-J_i$ would satisfy~\eqref{quaternion}.

We will use the freedom to act by gauge transformations to ensure that the infinitesimal change in connection $a^i$ satisfies the following two conditions:

\begin{definition}
Let $A$ be a definite connection which is a critical point of the pure connection action $S$. Let $\Sigma_A$ be determined by $A$ via~\eqref{Sigma-from-F}. We say that $a \in \Omega^1(M,\so(E))$ is in \emph{horizontal gauge} if 
\begin{equation}
\Sigma_i \wedge a^i  = 0
\label{horizontal-gauge}
\end{equation}
In terms of the almost complex structures, this is equivalent to $J_ia^i =0$. Given $a$ in horizontal gauge, we say that it is also in  \emph{vertical gauge} if in addition:
\begin{equation}
\epsilon^{ij}_{\phantom{ij}k}\Sigma_j \wedge (\diff_A a)^k = 0
\label{coulomb-gauge}
\end{equation}
When $a$ is in both horizontal and vertical gauge we say that \emph{$a$ is fully gauge fixed}.
\end{definition}
We briefly explain the names. Loosely speaking, gauge transformations are made up of diffeomorphisms on $M$ and purely ``vertical'' gauge transformations which preserve each fibre of $E$. The horizontal gauge fixing condition gives a complement to the infinitesimal action of diffeomorphisms, i.e.\ of horizontal lifts of vector fields from $M$. Now for those $a$ which are in horizontal gauge, the second condition~\eqref{coulomb-gauge} is equivalent to Coulomb gauge, $\diff_A^*a = 0$, as the following lemma shows. This then gives a complement to the infinitesimal action of vertical gauge transformations. 

\begin{lemma}
If $a$ is in horiztonal gauge and $\diff_A \Sigma_A=0$ then
\begin{align}
* a^i  &= \epsilon^{ij}_{\phantom{ij}k}\Sigma_j \wedge a^k\label{hodge-spin-gauge}\\
(\diff_A^* a)^i &= -*\left( \epsilon^{ij}_{\phantom{ij}k} \Sigma_j\wedge(\diff_A a)^k\right)\label{d-star-spin-gauge}
\end{align}
\end{lemma}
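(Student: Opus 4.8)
The plan is to establish both identities by a direct local computation at a point, using a carefully chosen orthonormal coframe adapted to the wedge-orthogonal $\Sigma$. Fix a point $p\in M$ and pick a positively oriented $g_\Sigma$-orthonormal coframe $e^0,e^1,e^2,e^3$ so that
\[
\Sigma_1 = e^0\wedge e^1 + e^2\wedge e^3,\qquad
\Sigma_2 = e^0\wedge e^2 + e^3\wedge e^1,\qquad
\Sigma_3 = e^0\wedge e^3 + e^1\wedge e^2,
\]
which is the standard model with $|\Sigma_i|^2 = 2$. Writing $a^i = a^i_\mu\, e^\mu$, the claimed identity \eqref{hodge-spin-gauge} is $\R$-linear in $a$ for fixed $\Sigma$, so it suffices to check it on each basis covector $e^\mu$ and then sum; the horizontal gauge constraint $\Sigma_i\wedge a^i = 0$, equivalently $J_i a^i = 0$, must be used, so really one checks the identity on the kernel of $(a^i)\mapsto \Sigma_i\wedge a^i$ inside $(\Lambda^1)^{\oplus 3}$. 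First I would compute $\epsilon^{ij}{}_k\,\Sigma_j\wedge e^\mu$ using \eqref{almost-complex}: since $\Sigma_j\wedge e^\mu = *J_j(e^\mu)$ and $*$ squares to $+1$ on the relevant forms, one gets $*(\epsilon^{ij}{}_k\Sigma_j\wedge a^k) = \epsilon^{ij}{}_k J_j(a^k)$, reducing \eqref{hodge-spin-gauge} to the purely algebraic statement $\epsilon^{ij}{}_k J_j(a^k) = a^i$ whenever $J_k a^k = 0$. This is a short quaternionic identity: apply $J_j$ and use \eqref{quaternion} together with $\epsilon^{ij}{}_k\epsilon_{lj}{}^m = \delta^i_l\delta^m_k - \delta^{im}\delta_{lk}$ (in flat indices) to collapse the sum; the $\delta_{ij}$ term in \eqref{quaternion} contributes $-J_jJ_j = $ (a multiple of the identity) and the antisymmetric term recombines with the constraint $J_k a^k = 0$. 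This is the main computational heart of the lemma, but it is elementary linear algebra once set up correctly, and orientation-preservation of $\Sigma$ is exactly what makes the signs in \eqref{quaternion} work out rather than producing $-a^i$.

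For the second identity \eqref{d-star-spin-gauge} I would differentiate the first. Apply $\diff$ to \eqref{hodge-spin-gauge}: on the left, $\diff(*a^i) = \pm *\,\diff_A^* a^i$ up to the curvature correction measuring the difference between $\diff$ and $\diff_A$, and on the right $\diff(\epsilon^{ij}{}_k\Sigma_j\wedge a^k) = \epsilon^{ij}{}_k(\diff\Sigma_j)\wedge a^k \pm \epsilon^{ij}{}_k\Sigma_j\wedge \diff a^k$. The key input is $\diff_A\Sigma_A = 0$ (the critical point equation): in a local frame this reads $\diff\Sigma_i = \epsilon_{ij}{}^k A^j\wedge\Sigma_k$ with the sign conventions of \eqref{connection-forms}, so the "$\diff\Sigma_j$'' term and the connection terms hidden in converting $\diff a^k$ to $(\diff_A a)^k$ and $\diff_A^* a$ to $\diff^* a$ should cancel in pairs, leaving precisely $*a^i$-dual statement $(\diff_A^* a)^i = -*(\epsilon^{ij}{}_k\Sigma_j\wedge (\diff_A a)^k)$. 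I would organize this by working $\so(E)$-equivariantly: replace $\diff$ by $\diff_A$ throughout using $\diff_A\Sigma = 0$ and the definition of $\diff_A$ on $\so(E)$-valued forms, so that \eqref{hodge-spin-gauge} becomes an identity of $\diff_A$-natural objects and its $\diff_A$-derivative is immediate, then extract \eqref{d-star-spin-gauge} via $\diff_A^* = -*\diff_A*$ on $1$-forms in dimension $4$.

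The step I expect to be the main obstacle is bookkeeping the signs: the sign in $\diff_A^* = \pm *\diff_A*$ depends on form degree, the $\epsilon$-conventions in \eqref{connection-forms}–\eqref{curvature-forms} differ from some standard references (as the authors themselves flag regarding the sign of $\diff_A\xi$), and the identity \eqref{hodge-spin-gauge} is sign-sensitive to the orientation of $\Sigma$. I would therefore fix all conventions explicitly at the start — orientation of $M$, orientation of $E$, the model frame above, $*$ acting on $2$-forms with $*^2 = 1$ and on $1$-forms via \eqref{almost-complex} — and verify \eqref{hodge-spin-gauge} once in the model to pin down the global sign, after which \eqref{d-star-spin-gauge} follows formally. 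No genuinely hard analysis is involved; the content is that horizontal gauge plus the torsion-free (critical point) condition turn the $\so(E)$-valued Hodge star and codifferential into the explicit algebraic/first-order expressions stated.
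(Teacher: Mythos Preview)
Your proposal is correct and follows essentially the same route as the paper: reduce \eqref{hodge-spin-gauge} via $J_i(\alpha)=*(\Sigma_i\wedge\alpha)$ to the quaternionic identity $a^i=-\epsilon^{ij}{}_k J_j a^k$ (which is exactly what applying $J_l$ to the constraint $J_ia^i=0$ and using \eqref{quaternion} gives), then obtain \eqref{d-star-spin-gauge} by applying $\diff_A$ together with $\diff_A\Sigma_A=0$ and $\diff_A^*=-*\diff_A*$. The only slip is your claim that $*^2=+1$ on the relevant forms --- in fact $*^2=-1$ on $1$- and $3$-forms in Riemannian dimension~$4$, which flips the sign in your reduced identity to $a^i=-\epsilon^{ij}{}_k J_j a^k$; since you already flag sign-bookkeeping as the point to be careful about, this causes no real trouble.
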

\begin{proof}
In terms of the almost complex structures $J_i$, the horizontal gauge condition \eqref{horizontal-gauge} reads $J_ia^i =0$ or, equivalently, $a^i = -\epsilon^{ijk}J_ja_k$. Taking the Hodge star, and using the fact that $*^2=-1$ on 3-forms, we obtain $*a^i = \epsilon^{ij}_{\phantom{ij}k}\Sigma_j \wedge a^k$. Now, since $\diff_A \Sigma_A =0$ and $\diff_A^* = -*\diff_A *$ equation \eqref{d-star-spin-gauge} follows from \eqref{hodge-spin-gauge}.
\end{proof}

Before discussing how to enforce these two conditions, we show the effect they have on the Hessian of the pure connection action. 

\begin{proposition}[The gauge-fixed Hessian]
\label{Hessian-positive-in-gauge}
Let $A \in \D$ be a critical point of the pure connection action $S$. Let $a$ be an infinitesimal change in $A$ which satisfies both gauge fixing conditions~\eqref{horizontal-gauge} and~\eqref{coulomb-gauge}. Then the Hessian of the pure connection action in this direction is given by
\begin{equation}
D^2S(a,a)
= 
\int_M \left(
|a|^2 - X_{ij} \left\langle (\diff_A^-a)^i, (\diff_A^-a)^j \right\rangle\right) 
\mu_A
\end{equation}
In particular, when  $R_+$ is negative definite, so that $X$ is negative definite, 
\[
D^2S(a,a)\geq 0
\] 
with equality if and only if $a=0$. 
\end{proposition}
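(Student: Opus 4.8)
The plan is to start from the formula in Lemma~\ref{Hessian-before-gauge-fix} and show that each of its three ingredients simplifies dramatically once the two gauge conditions are imposed. The first term is $-\epsilon_{ij}{}^k a^i \wedge a^j \wedge \Sigma_k$. Using the identity \eqref{hodge-spin-gauge}, namely $*a^i = \epsilon^{ij}{}_k \Sigma_j \wedge a^k$, which is valid in horizontal gauge, one rewrites $a^j \wedge \Sigma_k$ in terms of $*a$ and finds that $-\epsilon_{ij}{}^k a^i \wedge a^j \wedge \Sigma_k = |a|^2 \mu_A$ (up to bookkeeping with the quaternion relations \eqref{quaternion} and the normalisation $|\Sigma_i|^2 = 2$). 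This is where horizontal gauge earns its keep. The second and third terms combine into $X_{ij}((\diff_A a)^i - \phi^{ik}\Sigma_k) \wedge ((\diff_A a)^j - \phi^{jk}\Sigma_k)$; I would split $\diff_A a = \diff_A^+ a + \diff_A^- a$ and observe that, since $\phi^{ik}\Sigma_k$ is self-dual and $\diff_A^- a$ is anti-self-dual, the wedge product of a self-dual and an anti-self-dual 2-form vanishes. Hence the cross terms drop and the bracket splits as the sum of a ``self-dual part'' involving $(\diff_A^+ a)^i - \phi^{ik}\Sigma_k = (N^{ij} - \phi^{ij})\Sigma_j$ and an ``anti-self-dual part'' $X_{ij}(\diff_A^- a)^i \wedge (\diff_A^- a)^j = -X_{ij}\langle (\diff_A^- a)^i, (\diff_A^- a)^j\rangle \mu_A$ (the sign coming from the fact that the wedge pairing is negative definite on $\Lambda^-$).

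The crucial point is that the self-dual part vanishes. Here I would invoke vertical gauge \eqref{coulomb-gauge}, equivalently $\diff_A^* a = 0$ by \eqref{d-star-spin-gauge}. One needs to show that the matrix $N^{ij} - \phi^{ij}$, which by \eqref{Z} equals $Y Z$ where $Z$ is the matrix of $\sigma^+$ in the frame $\Sigma$, contributes nothing to the integral $\int_M X_{ij}(N^{ik}-\phi^{ik})(N^{jl}-\phi^{jl})\Sigma_k \wedge \Sigma_l$. By construction $\phi$ was chosen so that the symmetric trace-free part of $X(N-\phi)$ vanishes, i.e. $X(N-\phi)$ is of the form (skew) $+$ (scalar)$\cdot\Id$. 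The wedge pairing $\Sigma_k \wedge \Sigma_l = 2\delta_{kl}\mu_A$ is symmetric, so pairing it against a skew matrix kills that piece, leaving only the trace part: the integrand becomes proportional to $(\tr X(N-\phi))^2 \mu_A = f^2 \mu_A$ where $f$ is the scalar. The remaining work is to show $\int_M f^2 \mu_A$ is controlled — and in fact that the full self-dual contribution is a pure gauge/exact term that integrates to zero. This is precisely where vertical gauge should be used: $\diff_A^* a = 0$ together with $\diff_A \Sigma_A = 0$ forces the relevant scalar/trace quantity to integrate to zero (via integration by parts, since $\tr(\diff_A a) \Sigma$-type expressions are exact). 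I expect this is the step where one must be careful about exactly which combination survives.

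Once those reductions are in place, we have
\[
D^2 S(a,a) = \int_M \bigl( |a|^2 - X_{ij}\langle (\diff_A^- a)^i, (\diff_A^- a)^j\rangle \bigr)\, \mu_A,
\]
which is the claimed formula. For the positivity statement: when $R_+ < 0$ we have $Y = \Psi_A + \tfrac{\Lambda}{3}\Id = R_+$ negative definite, hence $X = Y^{-1}$ is negative definite, so $-X_{ij}\langle (\diff_A^- a)^i, (\diff_A^- a)^j\rangle \geq 0$ pointwise. Therefore the integrand is $\geq |a|^2 \geq 0$, giving $D^2S(a,a) \geq 0$, with equality forcing $|a|^2 \equiv 0$, i.e. $a = 0$. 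The main obstacle I anticipate is the middle paragraph: carefully verifying that the self-dual part of the quadratic form contributes nothing, which requires using both the algebraic choice of $\phi$ (killing the symmetric trace-free part) and the vertical gauge condition (killing the residual trace part after integration); the anti-self-dual part and the first term are comparatively mechanical applications of the identities already established.
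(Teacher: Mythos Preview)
Your treatment of the first term and of the anti-self-dual contribution is fine and matches the paper. The gap is in the self-dual piece. Two things go wrong.

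First, the claim that ``pairing against a skew matrix kills that piece'' is not correct here: the self-dual integrand is \emph{quadratic} in $N-\phi$, namely
\[
X_{ij}(N-\phi)^{ik}(N-\phi)^{jl}\,\Sigma_k\wedge\Sigma_l
= 2\,\tr\bigl((N-\phi)^T X (N-\phi)\bigr)\mu_A.
\]
Writing $Z=X(N-\phi)=B+f\Id$ with $B$ skew, one finds this equals $2\bigl(f^2\Lambda-\tr(B^2Y)\bigr)\mu_A$, so the skew part \emph{does} survive. Second, your proposed integration-by-parts fix cannot make $\int_M f^2\,\mu_A$ vanish: exactness arguments control linear quantities in $\diff_A a$, not squares.

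The paper's route is much cleaner and avoids both problems: the two gauge conditions force $N$ itself to lie in $S^2_0E$ \emph{pointwise}. The vertical gauge condition~\eqref{coulomb-gauge} says precisely that the skew part of $N$ vanishes. For the trace, differentiate the horizontal gauge condition $\Sigma_i\wedge a^i=0$ using $\diff_A\Sigma_A=0$ to get $\Sigma_i\wedge(\diff_A a)^i=0$, which is exactly $\tr N=0$. Hence $N\in S^2_0E$, so $\phi=N$ by uniqueness of the solution to $S^2_0(X\phi)=S^2_0(XN)$, and therefore $(\diff_A a)^i-\phi^{ij}\Sigma_j=(\diff_A^- a)^i$ pointwise. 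No integration by parts is needed; the self-dual part is identically zero, not merely zero in integral.
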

\begin{proof}
Recall the formula for $D^2S_A(a,a)$ given in Lemma~\ref{Hessian-before-gauge-fix} above. When $a$ is in horizontal gauge, the first term in the integrand is
\begin{equation}
- \epsilon_{ij}^{\phantom{ij}k}a^i\wedge a^j \wedge \Sigma_k
	=
		a^i \wedge * a^i
	=
		|a|^2\, \mu_A	
\end{equation}

We now turn to the second term in the integrand, namely
\[
X_{ij}
\left((\diff_A a)^i - \phi^{ik}\Sigma_k\right)
\wedge
\left((\diff_A a)^j - \phi^{jk}\Sigma_k\right)
\]
Here $\phi \in S^2_0E$ is determined by $a$ via $S^2_0(X\phi) = S^2_0XN$ where $(\diff_A^+a)^i = N^{ij}\Sigma_j$. The Coulomb gauge condition~\eqref{coulomb-gauge} says that the skew-symmetric part of $N$ vanishes. Meanwhile, differentiating the horizontal gauge condition~\eqref{horizontal-gauge} and using $\diff_A \Sigma_A=0$ gives
\begin{equation}
\Sigma_i \wedge (\diff_A a)^i = 0
\end{equation}
which means that the trace of $N$ vanishes. It follows that $N$ is itself a symmetric trace-free matrix and so $\phi = N$. It follows that
\begin{equation}\label{sigma-asd}
(\diff_A a)^i - \phi^{ij}\Sigma^j = (\diff^-_A a)^i
\end{equation}
is purely anti-self-dual. The second term is thus
\[
X_{ij} (\diff_A^- a)^i\wedge (\diff_A^-a)^j
\]
For anti-self-dual forms, the wedge product is minus the pointwise inner product which completes the proof.
\end{proof}

\subsection{Gauge fixing}\label{gauge-fixing}

We now show that the space of fully gauge fixed $a$ defines a complement to the infinitesimal gauge action. Recall that elements $\eta \in \Lie(\G)$ correspond to pairs $(v,\xi)$ where $\xi \in \Omega^0(M,\so(E))$ and $v$ is a vector field on $M$. The infinitesimal action of $\eta$ at $A$ is $L_\eta(A) = \iota_v F_A + \diff_A \xi$

Let 
\begin{equation}
W 
  = 
    \left\{ 
    a \in C^\infty(\Lambda^1\otimes E) : 
    a \textrm{ is fully gauge fixed, \eqref{horizontal-gauge} and \eqref{coulomb-gauge} hold}
    \right\}
\end{equation}
The goal of this section is to prove the following:
\begin{proposition}\label{linear-gauge-fix}
There is a direct sum decomposition
\begin{equation}
\Omega^1(M,\so(E)) = \{L_\eta A : \eta \in \Lie(\G)\} \oplus W
\end{equation}
\end{proposition}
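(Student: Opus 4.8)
The plan is to show that the two subspaces on the right-hand side intersect trivially and that together they span $\Omega^1(M,\so(E))$. Since the statement is about a decomposition of a function space, the natural approach is elliptic theory: I would identify the gauge-fixing conditions~\eqref{horizontal-gauge} and~\eqref{coulomb-gauge} as the vanishing of a first-order operator applied to $a$, recognise the orthogonal complement (with respect to the $L^2$ inner product built from $g_A$) of $W$ as the image of the formal adjoint, and then check that this image coincides with the infinitesimal gauge orbit $\{L_\eta A\}$ up to a finite-dimensional discrepancy that one rules out by hand.

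Concretely, first I would package the gauge map. Define $P\colon \Lie(\G) = \Omega^0(M,\so(E))\oplus \mathfrak{X}(M) \to \Omega^1(M,\so(E))$ by $P(\xi,v) = \diff_A\xi + \iota_vF_A$, using formula~\eqref{infinitesimal-action}. The horizontal gauge condition $\Sigma_i\wedge a^i = 0$ is, via~\eqref{almost-complex}, the equation $J_ia^i=0$, i.e.\ the vanishing of a pointwise (zeroth-order, surjective) bundle map $\pi_H\colon \Lambda^1\otimes E \to \Lambda^1$, $a\mapsto J_ia^i$. The vertical condition~\eqref{coulomb-gauge} is, for $a$ already in horizontal gauge, equivalent by~\eqref{d-star-spin-gauge} to the Coulomb condition $\diff_A^*a = 0$. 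So $W = \ker\pi_H\cap\ker\diff_A^*$. I would then compute the formal adjoints: the adjoint of $\pi_H$ is the map $\Lambda^1\to\Lambda^1\otimes E$ sending $v\mapsto (J_iv)\otimes e^i$ — up to a constant this is exactly $\iota_vF_A$ at a critical point, since $F^i = Y^{ij}\Sigma_j$ and contracting $\Sigma_j$ with $v$ gives $-J_jv$ (up to sign/normalisation) — while the adjoint of $\diff_A^*$ is $\diff_A$ acting on $\Omega^0(M,\so(E))$. Thus $W^\perp = \im(\iota_{(\cdot)}F_A) + \im(\diff_A) $, which is morally the image of $P$. Here I must be slightly careful about the fact that horizontal lifts of vector fields are not literally $\iota_vF_A$ but $\diff_A(\iota_v A) + \iota_vF_A$; the $\diff_A$-exact correction is harmless because it lies in $\im(\diff_A)$ already, so the two spans agree.

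With $W^\perp = \im P$ in hand, the decomposition would follow from elliptic theory: $P^*P$ (or rather the relevant combination) is an overdetermined-elliptic operator, so $\Omega^1(M,\so(E)) = \ker P^* \oplus \overline{\im P}$, and since $P$ has closed range and finite-dimensional kernel/cokernel this is a genuine direct-sum decomposition of smooth sections. The only remaining point is to verify $\ker P^* = W$ exactly — i.e.\ that $a\in W$ iff $a\perp \im P$ — which is precisely the adjoint computation sketched above, using the critical-point equation $\diff_A\Sigma_A=0$ to relate the two formulations of the vertical gauge condition. I should also confirm that $P$ itself is injective modulo the (finite-dimensional) space of Killing-type symmetries so that no dimension is lost; but for the direct-sum statement injectivity of $P$ is not actually needed — only that $\im P$ and $W$ together span, which is the elliptic splitting, plus $\im P \cap W = 0$, which is immediate since $W = (\im P)^\perp$.

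The main obstacle I anticipate is the adjoint bookkeeping: correctly identifying $\iota_vF_A$ (with all the normalisation constants from $|\Sigma_i|^2=2$, $F^i=Y^{ij}\Sigma_j$, and the definition of $J_i$) as, up to a positive bundle automorphism, the transpose of $a\mapsto J_ia^i$, so that the horizontal gauge condition is genuinely $L^2$-orthogonal to horizontal lifts. A related subtlety is that the natural pairing between $\mathfrak{X}(M)$ and $\Lambda^1$ on the one hand, and the $\so(E)$-pairing on the other, must be matched up so that the combined operator $P$ is elliptic; since $Y$ (equivalently $X$) is invertible this is where definiteness of the connection enters, and one should check the symbol of $P^*P$ is an isomorphism off the zero section. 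Once those identifications are nailed down, the rest is the standard Hodge-theoretic splitting for an overdetermined elliptic operator and is routine.
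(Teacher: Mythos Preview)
Your approach has a genuine gap: the decomposition you are aiming for is \emph{not} $L^2$-orthogonal, so the Hodge-theoretic splitting you propose does not apply. Concretely, the adjoint of $\pi_H(a)=J_ia^i$ is $\pi_H^*(\alpha)^i=-J_i\alpha$, whereas $q(v)^i=\iota_vF^i=Y^{ij}J_j(v^\flat)$. These differ by the action of $Y$ on the $E$-index, and unless $Y$ is a multiple of the identity (i.e.\ the anti-self-dual case $\Psi=0$) the subspaces $\im\pi_H^*$ and $\im q$ are distinct $4$-dimensional subbundles of $\Lambda^1\otimes\so(E)$. So ``$W^\perp=\im P$'' is false in general, and your claim that $\iota_vF_A$ is the transpose of $\pi_H$ ``up to a positive bundle automorphism'' does not help, because that automorphism does not preserve the relevant subspace. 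The paper flags exactly this point: $\ker p$ and $\im q$ are not orthogonal, so the projection $\Pi$ onto $\ker p$ along $\im q$ is not self-adjoint.

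The paper's argument circumvents this by not attempting an orthogonal splitting at all. One first uses the purely algebraic fact that $p\circ q(v)=-\Lambda v^\flat$ is invertible to solve the horizontal condition for $v$ in terms of $\xi$ and $a$; substituting back, both gauge conditions become the single equation $\diff_A^*\Pi\diff_A\xi=-\diff_A^*\Pi a$. One then shows $\diff_A^*\Pi\diff_A$ is an isomorphism in three steps: a symbol computation gives ellipticity (using $R_+<0$ so no eigenvalue of $Y$ equals $\tr Y$); a homotopy $Y_t=t\Psi+\tfrac{\Lambda}{3}\Id$ to the anti-self-dual case makes $\Pi_0$ self-adjoint and gives index zero; and injectivity comes from the already-established positivity of the gauge-fixed Hessian together with the absence of Killing fields in negative Ricci curvature. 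Your sketch misses both the non-orthogonality obstruction and the specific ingredients (the homotopy for the index, and the Hessian/Killing-field argument for injectivity) needed to overcome it.
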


Before giving the proof we first try and lighten the notation a little. Write 
\[
p \colon \Lambda^1 \otimes \so(E) \to \Lambda^1,
\quad
p(a) = J_ia^i
\] 
Recall that the horizontal gauge condition \eqref{horizontal-gauge} is equivalent to $p(a)=0$. Write 
\[
q \colon TM \to \Lambda^1\otimes \so(E),
\quad
q(v) = \iota_v F_A
\]
To prove Proposition~\ref{linear-gauge-fix} we must show that for any $a$ there is a unique solution $(v,\xi)$ to the pair of equations
\begin{align}
p\left( a + q(v) + \diff_A \xi\right) &=0\label{alg-gauge-fix}\\
\diff_A^*\left( a + q(v) + \diff_A \xi\right) &=0 \label{diff-gauge-fix}
\end{align}
We will show that this leads to an elliptic equation for $\xi$. As we will see shortly, the composition $p\circ q$ is invertible. Assuming this for the moment, for any choice of $a$ and $\xi$, there is a unique choice of $v$ solving~\eqref{alg-gauge-fix}, namely
\begin{equation}
\label{v-in-terms-of-xi}
v = - (p\circ q)^{-1}\circ p\left((\diff_A \xi) + a\right)
\end{equation}
Write 
\[
\Pi = 1 - q \circ (p \circ q)^{-1} \circ p
\]
It is simple to check that $\Pi$ is a projection operator, projecting $\Lambda^1\otimes E$ onto $\ker(p)$ against $\im(q)$. In other words, $\Pi(a)$ is what one obtains by putting $a$ in horizontal gauge using the infinitesimal action of tangent vectors. Now substituting~\eqref{v-in-terms-of-xi} into~\eqref{diff-gauge-fix} we see that~\eqref{alg-gauge-fix} and~\eqref{diff-gauge-fix} are equivalent to the following single equation for $\xi$:
\begin{equation}\label{gauge-fix-PDE}
\diff_A^*\Pi \diff_A \xi
  =
  -  \diff_A^* \Pi a
\end{equation}
We will prove that~\eqref{gauge-fix-PDE} has a unique solution in three steps: by showing that $\diff_A^*\Pi\diff_A$ is elliptic; that it has index zero and finally that it is injective. 

The symbol of $\diff^*_A \Pi \diff_A$ in the direction of the covector $\alpha$ is the map
\[
	\Sym(\alpha) \colon E \to E,\quad \Sym(\alpha)(\xi) = \left\langle \alpha, \Pi(\alpha \otimes \xi)\right\rangle
\]
To compute this we first give a formula for $\Pi$. Along the way we show that $p\circ q$ is invertible. Recall that $F^i = Y^{ij}\Sigma_j$ where $Y = \Psi+\frac{\Lambda}{3}\Id$. Our curvature assumption that $R_+<0$ is equivalent to $Y<0$. 

\begin{lemma}
We have $p\circ q(v) = -\Lambda v^\flat$  where $v^\flat$ denotes the covector which is metric dual to the tangent vector $v$. It follows that $(p\circ q)^{-1}(\alpha) = -\Lambda^{-1} \alpha^\#$ where $\alpha^\#$ is the tangent vector metric dual to the covector $\alpha$.
\end{lemma}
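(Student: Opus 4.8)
The plan is to reduce the claim to the quaternionic algebra of the almost complex structures $J_i$. Recall (as noted just before the lemma) that $F^i = Y^{ij}\Sigma_j$ with $Y = \Psi_A + \frac{\Lambda}{3}\Id$ symmetric and $\tr Y = \Lambda$; this holds for any definite connection, critical or not, and the sign of $Y$ will play no role. The two facts I would use about the $J_i$ are: (i) contracting a vector field $v$ into $\Sigma_j$ and raising the remaining index with $g_A$ produces, by the very definition of $J_j$ as ``$\Sigma_j$ with an index raised'', the vector field $J_j v$, i.e.\ $\iota_v\Sigma_j = (J_j v)^\flat$ up to a universal sign; and (ii) the operator $J_i$ on $1$-forms defined by \eqref{almost-complex} corresponds, under the musical isomorphism, to the endomorphism $J_i$ of $TM$, again up to a universal sign. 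Both signs enter squared below, so neither affects the final identity.

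With these in hand, compute directly: for a vector field $v$,
\[
p\circ q(v) \;=\; J_i\bigl(\iota_v F^i\bigr) \;=\; Y^{ij}\,J_i\bigl(\iota_v\Sigma_j\bigr) \;=\; Y^{ij}\,\bigl(J_iJ_j\,v\bigr)^\flat .
\]
Now insert the quaternion relation \eqref{quaternion}, $J_iJ_j = \epsilon_{ij}^{\phantom{ij}k}J_k - \delta_{ij}$. Since $Y$ is symmetric the skew term drops out, $\epsilon_{ij}^{\phantom{ij}k}Y^{ij} = 0$, leaving
\[
p\circ q(v) \;=\; -\,Y^{ij}\delta_{ij}\,v^\flat \;=\; -(\tr Y)\,v^\flat \;=\; -\Lambda\, v^\flat .
\]
As $\Lambda\neq 0$, this exhibits $p\circ q$ as $-\Lambda$ times the metric isomorphism $TM\to\Lambda^1$, hence invertible, and inverting gives $(p\circ q)^{-1}(\alpha) = -\Lambda^{-1}\alpha^{\#}$, as claimed.

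I expect the only point requiring care to be the bookkeeping in facts (i) and (ii): which endomorphism convention for $J_i$ is in force, and how $*$, $\wedge$ and the musical isomorphisms interact in dimension $4$. As observed, every sign ambiguity there cancels in the composition $p\circ q$, so what genuinely matters is only the diagonal part $J_i^2 = -\Id$ of \eqref{quaternion} together with the symmetry and trace of $Y$; it therefore suffices to pin those down, and the $\epsilon$-term takes care of itself by symmetry.
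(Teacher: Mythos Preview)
Your proof is correct and follows essentially the same route as the paper: expand $p\circ q(v)=J_i(\iota_vF^i)=Y^{ij}J_iJ_j(v^\flat)$, apply the quaternion relation \eqref{quaternion}, use the symmetry of $Y$ to kill the $\epsilon$-term, and conclude via $\tr Y=\Lambda$. The paper's proof is the same computation with less commentary on the sign bookkeeping in (i) and (ii).
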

\begin{proof}
This is a direct calculation, using the quaternion relations~\eqref{quaternion}. 
\begin{align*}
p\circ q(v)
  &=
    J_i( \iota_v F^i)\\
  &=
    J_i(Y^{ij}\iota_v\Sigma_j)\\
  &=
    Y^{ij} J_iJ_j(v^\flat)\\
  &=
    Y^{ij}\left(\epsilon_{ij}^{\phantom{ij}k}J_k(v^\flat) - \delta_{ij} v^\flat\right)\\
  &=
    - \tr(Y)v^\flat\\
  &=
    - \Lambda v^\flat \qedhere
\end{align*} 
\end{proof}

\begin{lemma}
The projection $\Pi$ has the following expression in terms of the self-dual curvature matrix~$Y^{ij}$.
\[
(\Pi a)^i = a^i + \Lambda^{-1}\epsilon_{kj}^{\phantom{kj}l}Y^{ik}J_l(a^j) 
        - \Lambda^{-1}Y^{ij}a^j
 \]	
\end{lemma}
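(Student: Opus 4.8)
The plan is to compute the action of $\Pi = 1 - q\circ(p\circ q)^{-1}\circ p$ directly using the two lemmas just established, unwinding the definitions of $p$, $q$, and the formula $(p\circ q)^{-1}(\alpha) = -\Lambda^{-1}\alpha^\#$.

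\medskip

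First I would compute $p(a) = J_i a^i$; this is simply a $1$-form on $M$, call it $\beta = J_i a^i$. Then I apply $(p\circ q)^{-1}$ to get the tangent vector $-\Lambda^{-1}\beta^\#$. Next I apply $q$, which means contracting the curvature with this vector field: $q(-\Lambda^{-1}\beta^\#) = -\Lambda^{-1}\iota_{\beta^\#}F_A$. Writing $F^i = Y^{ij}\Sigma_j$ and using that $\iota_{\beta^\#}\Sigma_j = J_j(\beta)$ (which follows from the definition \eqref{almost-complex} of the $J_i$ together with the identification of a vector with its dual covector), this becomes $-\Lambda^{-1}Y^{ij}J_j(\beta)\otimes e^i$ in the local frame. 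Substituting $\beta = J_k a^k$ then gives the $i$-th component of $q\circ(p\circ q)^{-1}\circ p(a)$ as $-\Lambda^{-1}Y^{ij}J_j J_k(a^k)$. Finally I expand $J_jJ_k$ using the quaternion relations \eqref{quaternion}, $J_jJ_k = \epsilon_{jk}^{\phantom{jk}l}J_l - \delta_{jk}$, which yields
\[
-\Lambda^{-1}Y^{ij}\left(\epsilon_{jk}^{\phantom{jk}l}J_l(a^k) - \delta_{jk}a^k\right)
= -\Lambda^{-1}\epsilon_{jk}^{\phantom{jk}l}Y^{ij}J_l(a^k) + \Lambda^{-1}Y^{ij}a^j.
\]
Subtracting this from $a^i$ gives exactly the claimed formula, after relabelling the dummy indices $j,k$ in the epsilon term to match the statement (the sign of $\epsilon_{jk}^{\phantom{jk}l} = \epsilon_{kj}^{\phantom{kj}l}$... actually $\epsilon_{jk}^{\phantom{jk}l} = -\epsilon_{kj}^{\phantom{kj}l}$, so swapping the names of the two contracted $J$-indices and the $a$-index accounts for the sign in front of the middle term).

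\medskip

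The only genuinely non-routine point, and the one I would be most careful about, is the identity $\iota_{v}\Sigma_j = J_j(v^\flat)$ relating contraction of the self-dual two-forms $\Sigma_j$ with the almost complex structures $J_j$; one must check this is consistent with the normalisation $|\Sigma_j|^2 = 2$ and the defining equation \eqref{almost-complex}, $J_j(\alpha) = *(\Sigma_j\wedge\alpha)$, and in particular that it is this quantity (and not, say, $-J_j(v^\flat)$) that appears — this is where the orientation-preserving property of $\Sigma\colon E\to\Lambda^+$, already invoked after \eqref{quaternion}, is used. Everything else is bookkeeping with the $\so(3)$ indices. I would also note at the end that the formula makes manifest that $\Pi$ restricted to $\ker p$ is the identity and that $\Pi\circ q = 0$, consistent with its description as the projection onto $\ker(p)$ along $\im(q)$, though this is not strictly needed for the symbol computation that follows.
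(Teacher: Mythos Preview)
Your argument is correct and follows exactly the same route as the paper's proof: compute $p(a)$, apply $(p\circ q)^{-1}$, then $q$, and expand $J_jJ_k$ via the quaternion relations. The only quibble is that your parenthetical about the sign of the $\epsilon$-term is unnecessary and slightly muddled: after obtaining $\Lambda^{-1}\epsilon_{jk}^{\phantom{jk}l}Y^{ij}J_l(a^k)$ you simply relabel the dummy indices $j\leftrightarrow k$ to get $\Lambda^{-1}\epsilon_{kj}^{\phantom{kj}l}Y^{ik}J_l(a^j)$, with no sign change needed.
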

\begin{proof}
Again, this is direct calculation.	
\begin{align*}
(\Pi a)^i
  &=
    a^i - q\circ (p\circ q)^{-1} \circ p(a)^i\\
  &=
    a^i + \Lambda^{-1} \iota_{(J_ja^j)^\#}F^i\\
  &=
    a^i + \Lambda^{-1}Y^{ik} \iota_{(J_ja^j)^\#}\Sigma^k\\
  &=
    a^i + \Lambda^{-1}Y^{ik} J_kJ_j(a^j)\\
  &=
    a^i + \Lambda^{-1}\epsilon_{kj}^{\phantom{kj}l}Y^{ik}J^l(a^j) 
        - \Lambda^{-1}Y^{ij}a^j
        \qedhere
\end{align*}
\end{proof}

\begin{lemma}
When $R_+$ is negative definite, $\diff_A^*\Pi \diff_A$ is an elliptic operator on sections of $\so(E)$.
\end{lemma}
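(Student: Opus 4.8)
The plan is to compute the principal symbol of the second-order operator $\diff_A^*\Pi\diff_A$ at an arbitrary non-zero covector $\alpha$ and show that it is an invertible endomorphism of $\so(E)$. Since $\diff_A\colon \Omega^0(M,\so(E))\to\Omega^1(M,\so(E))$ has symbol $\xi\mapsto\alpha\otimes\xi$ and $\diff_A^*$ has symbol $a\mapsto-\iota_{\alpha^\#}a$, and $\Pi$ is a bundle map (order zero), the symbol in question is precisely the map $\Sym(\alpha)$ recorded above, namely $\Sym(\alpha)(\xi)=\langle\alpha,\Pi(\alpha\otimes\xi)\rangle$ (an overall sign plays no role). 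Substituting $a^i=\alpha\,\xi^i$ into the formula for $\Pi$ from the previous lemma and pairing with $\alpha$ gives
\[
\Sym(\alpha)(\xi)^i
 =
 |\alpha|^2\xi^i
 +
 \Lambda^{-1}\epsilon_{kj}^{\phantom{kj}l}Y^{ik}\left\langle\alpha,J_l(\alpha)\right\rangle\xi^j
 -
 \Lambda^{-1}Y^{ij}|\alpha|^2\xi^j .
\]
The middle term vanishes: each $J_l$ is an isometry of $g_A$ and a complex structure, so $J_l(\alpha)\perp\alpha$ and $\langle\alpha,J_l(\alpha)\rangle=0$. Hence
\[
\Sym(\alpha)(\xi)=|\alpha|^2\left(\Id-\Lambda^{-1}Y\right)(\xi),
\]
so ellipticity reduces to the claim that $\Id-\Lambda^{-1}Y$ is an invertible endomorphism of $\so(E)\cong E$.

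To finish I would use the two facts about $Y$ already established: $Y=R_+$ under the identification $\Sigma$, and $\tr Y=\Lambda$ by our normalisation $\tr Y=\Lambda$. When $R_+<0$ we have $\Lambda=\tr R_+<0$, so $\Lambda^{-1}Y$ is a positive-definite symmetric endomorphism with $\tr(\Lambda^{-1}Y)=1$. Its three eigenvalues are therefore positive and sum to $1$, so each lies in the open interval $(0,1)$; consequently $\Id-\Lambda^{-1}Y$ has all eigenvalues in $(0,1)$ and is positive definite. Thus for every $\alpha\neq0$ the symbol $\Sym(\alpha)$ equals the positive scalar $|\alpha|^2$ times a positive-definite endomorphism, hence is invertible, which is exactly the ellipticity of $\diff_A^*\Pi\diff_A$.

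There is no real obstacle here: once the formula for $\Pi$ is in hand the computation is routine, and the only two points requiring a moment's attention are the vanishing of the cross term (which is where the compatibility of the $J_i$ with $g_A$ is used) and the elementary observation that a positive-definite symmetric $3\times3$ matrix of trace $1$ has eigenvalues in $(0,1)$. I would also note in passing that the argument uses only that $R_+$ is definite with the same sign as $\Lambda$, not the full strength of $R_+<0$; the inequality $R_+<0$ will be needed later, when proving injectivity of $\diff_A^*\Pi\diff_A$ and positivity of the Hessian.
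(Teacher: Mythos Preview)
Your proof is correct and follows essentially the same route as the paper: both compute the symbol from the explicit formula for $\Pi$, use orthogonality of $\alpha$ and $J_l(\alpha)$ to kill the cross term, and then argue that $\Id-\Lambda^{-1}Y$ has no kernel because the eigenvalues of $Y$ are negative and sum to $\Lambda$. Your observation that the symbol is in fact positive definite (and that only definiteness of $R_+$, not its sign, is needed here) is a small bonus but does not change the argument.
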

\begin{proof}
We use the above formula for $\Pi$ to compute the symbol acting on the covector $\alpha$. Write $\xi = \xi^i \otimes \widehat{e}_i$. 
\begin{align*}
\Sym(a)(\xi)
  &=
    \left\langle \alpha, \Pi(\alpha \otimes \xi) \right\rangle\\
  &=
    \xi^i|\alpha|^2
    + 
    \Lambda^{-1}\epsilon_{kj}^{\phantom{kj}l}\xi^jY^{ik}\left\langle \alpha, J_l(\alpha) \right\rangle
    -
    \Lambda^{-1}Y^{ij}\xi^j|\alpha|^2\\
  &=
    \left(\xi^i - \Lambda^{-1}Y^{ij}\xi^j\right)|\alpha|^2
\end{align*}
where in the last line we have used that $\alpha$ and $J_l(\alpha)$ are orthogonal. It follows that, for $\alpha \neq 0$, the kernel of $\Sym(\alpha)$ is exactly the space of eigenvectors of $Y$ with eigenvalue $\Lambda$. But $\Lambda = \tr(Y)$ and all eigenvalues of $Y$ are \emph{strictly} negative, so none can be equal to~$\Lambda$. It follows that $\Sym(\alpha)$ is invertible and so $\diff_A^*\Pi\diff_A$ is elliptic.
\end{proof}

\begin{lemma}
When $R_+$ is negative definite, the index of $\diff_A^*\Pi \diff_A$ is zero.
\end{lemma}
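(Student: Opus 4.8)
The plan is to compute the index by the homotopy invariance of the Fredholm index, deforming $\diff_A^*\Pi\diff_A$ through elliptic operators to the connection Laplacian $\diff_A^*\diff_A$, whose index vanishes because it is formally self-adjoint. The point is that the index of an elliptic operator between sections of a fixed bundle (here $\so(E)$) over the closed manifold $M$ depends only on the homotopy class of its principal symbol, and the previous lemmas have already put that symbol in a form from which the required homotopy is transparent.

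Concretely, I would introduce the family of second-order operators
\[
P_t = \diff_A^*\bigl((1-t)\Id + t\,\Pi\bigr)\diff_A, \qquad t\in[0,1],
\]
on $C^\infty(\so(E))$, so that $P_0 = \diff_A^*\diff_A$ and $P_1 = \diff_A^*\Pi\diff_A$; this is affine in $t$, hence norm-continuous as a family of operators between fixed Sobolev completions. Using the formula for $\Pi$ and exactly the computation of the symbol of $\diff_A^*\Pi\diff_A$ from the ellipticity lemma — in which the $J_l$-term drops out because $\alpha$ and $J_l\alpha$ are orthogonal — the principal symbol of $P_t$ in the direction of a covector $\alpha\neq 0$ is $\Sym_t(\alpha)(\xi) = |\alpha|^2\bigl(\xi - t\,\Lambda^{-1}Y\xi\bigr)$. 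This is invertible exactly when $\Lambda/t$ is not an eigenvalue of $Y$. Since $R_+<0$ is equivalent to $Y<0$ and $\Lambda = \tr Y$ is a sum of three strictly negative numbers, every eigenvalue $\lambda$ of $Y$ satisfies $\Lambda < \lambda < 0$; hence for $t\in(0,1]$ we have $\Lambda/t \le \Lambda < \lambda$, and for $t=0$ the symbol is just $|\alpha|^2\Id$. So $\{P_t\}$ is a continuous path of elliptic operators, and $\operatorname{ind}P_t$ is independent of $t$.

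It therefore suffices to compute $\operatorname{ind}P_0 = \operatorname{ind}(\diff_A^*\diff_A)$, and the connection Laplacian is formally self-adjoint on the closed manifold $M$ since $\langle\diff_A^*\diff_A\xi,\eta\rangle_{L^2} = \langle\diff_A\xi,\diff_A\eta\rangle_{L^2}$ is symmetric; by elliptic regularity its cokernel is isomorphic to its kernel, so its index is zero, whence $\operatorname{ind}(\diff_A^*\Pi\diff_A) = \operatorname{ind}P_1 = 0$. (Equivalently, one can skip the homotopy: because the $J_l$-term contributes nothing to the principal symbol, $\diff_A^*\Pi\diff_A$ differs from the elliptic, formally self-adjoint operator $\diff_A^*(\Id - \Lambda^{-1}Y)\diff_A$ by an operator of order $\le 1$, a relatively compact perturbation that does not change the index.) There is no serious obstacle here once the symbol is in hand; the only points needing care are checking that $P_t$ stays elliptic for all $t\in[0,1]$ — which uses precisely the eigenvalue bound $\Lambda < \lambda < 0$ supplied by $R_+<0$, the same input as in the ellipticity lemma — and the bookkeeping remark that the first-order, $J$-dependent part of $\Pi$ is invisible to the principal symbol and so irrelevant to the index.
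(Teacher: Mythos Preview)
Your argument is correct. Both the linear homotopy $P_t=\diff_A^*\bigl((1-t)\Id+t\,\Pi\bigr)\diff_A$ and the alternative ``same principal symbol'' observation are valid ways to reach a formally self-adjoint elliptic comparison operator, and your eigenvalue check $0<\lambda_i/\Lambda<1$ (equivalently $\Lambda/t\le\Lambda<\lambda_i$) correctly keeps $P_t$ elliptic for all $t\in[0,1]$.

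The paper uses a different homotopy. Rather than interpolating between $\Id$ and $\Pi$, it deforms the curvature matrix itself, setting $Y_t=t\,\Psi+\tfrac{\Lambda}{3}\Id$ and building the corresponding projections $\Pi_t$ onto $\ker p$ along $\im q_t$; since $\tr Y_t=\Lambda$ and $Y_t<0$ for all $t\in[0,1]$, the earlier lemmas apply verbatim and $\diff_A^*\Pi_t\diff_A$ stays elliptic. At $t=0$ one has $Y_0=\tfrac{\Lambda}{3}\Id$, so $\im q_0$ is the \emph{orthogonal} complement of $\ker p$, $\Pi_0$ is self-adjoint, and the index vanishes. The paper's route is a bit more conceptual---it isolates the self-dual Weyl part $\Psi$ as the sole obstruction to self-adjointness of $\Pi$ and simply switches it off---while yours is more direct, needing nothing beyond the symbol formula already in hand. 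Your parenthetical alternative (that $\diff_A^*\Pi\diff_A$ and the self-adjoint elliptic operator $\diff_A^*(\Id-\Lambda^{-1}Y)\diff_A$ share the same principal symbol) is the slickest of the three and avoids any homotopy at all.
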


\begin{proof}
If $\Pi$ were self adjoint then $\diff_A^*\Pi\diff_A$ would be also and it would follow immediately that the index vanishes. This is not the case however because, in general, $\ker p$ and $\im q$ are not orthogonal subspaces. To remedy this, consider the path $Y_t^{ij} = t \Psi^{ij} + \frac{\Lambda}{3} \delta^{ij}$ of endomorphisms and the corresponding path $F_t^i = Y_t^{ij}\Sigma^j$ of sections of $\Lambda^2\otimes \so(E)$. The forms $F_t$ define a path of maps $q_t \colon TM \to \Lambda^1 \otimes \so(E)$ given by $q_t(v) = \iota_vF_t$. Since $Y=Y_1$ is negative-definite, the same is true for $Y_t$ for all $t \in [0,1]$. Moreover, $\tr Y_t = \Lambda$. These were the only properties we used in the above arguments. It follows that $\Pi = 1 - q_t \circ (p\circ q_t)^{-1} \circ p$ is a projection of $\Lambda^1\otimes \so(E)$ onto $\ker p$ against $\im q_t$ and, moreover, that $\diff_A^*\Pi_t \diff_A$ is a path of elliptic operators. It follows that our operator has the same index as $\diff_A^*\Pi_0\diff_A$. But when $t=0$, $\im q_0$ \emph{is} the orthogonal complement of $\ker p$ and so $\Pi_0$ is a self-adjoint projection. It follows that the index of $\diff_A^*\Pi_0 \diff_A$, and hence of all the $\diff_A^*\Pi_t \diff_A$, vanishes for $t \in [0,1]$.
\end{proof}

\begin{proposition}
When $R_+$ is negative-definite, the map 
\[
\diff_A^*\Pi\diff_A \colon C^\infty(E) \to C^\infty(E)
\] 
is an isomorphism.	
\end{proposition}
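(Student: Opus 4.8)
The plan is to invoke the two preceding lemmas: $\diff_A^*\Pi\diff_A$ is elliptic and has index zero, so it is an isomorphism as soon as it is injective. The heart of the argument is a rigidity trick for the kernel. Given $\xi\in C^\infty(E)=C^\infty(\so(E))$ with $\diff_A^*\Pi\diff_A\xi=0$, I would set $b:=\Pi\diff_A\xi$ and show that $b$ is \emph{simultaneously} fully gauge fixed (so that the strict positivity of Proposition~\ref{Hessian-positive-in-gauge} applies to it) and tangent to the gauge orbit through $A$ (so that it is a null vector of the Hessian, since $S$ is $\G$-invariant and $A$ is critical). These two properties are only compatible if $b=0$, and then a short additional argument upgrades $b=0$ to $\xi=0$.

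Here is how the two properties come out. Since $\Pi$ has image $\ker p$, the element $b$ satisfies the horizontal gauge condition~\eqref{horizontal-gauge}; and $\diff_A^*b=\diff_A^*\Pi\diff_A\xi=0$, so by the lemma identifying~\eqref{coulomb-gauge} with Coulomb gauge $\diff_A^*a=0$ for $a$ in horizontal gauge, $b$ also satisfies~\eqref{coulomb-gauge}. Thus $b\in W$ is fully gauge fixed. On the other hand, the formula $\Pi=1-q\circ(p\circ q)^{-1}\circ p$ together with $(p\circ q)^{-1}(\alpha)=-\Lambda^{-1}\alpha^\#$ gives $b=\diff_A\xi-\iota_vF_A$ for the explicit vector field $v=-\Lambda^{-1}\bigl(J_i(\diff_A\xi)^i\bigr)^\#$, and comparing with~\eqref{infinitesimal-action} this reads $b=L_\eta A$ with $\eta=(-v,\xi)\in\Lie(\G)$. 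Differentiating the constant function $t\mapsto S(\gamma_t^*A)$ twice at $t=0$, where $\gamma_t$ is the flow of $\eta$, and using that $A$ is critical, gives $D^2S(b,b)=0$; Proposition~\ref{Hessian-positive-in-gauge} then forces $b=0$.

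It remains to deduce $\xi=0$ from $b=0$, i.e.\ from $\diff_A\xi=\iota_vF_A$. With $\tilde\eta=(-v,\xi)$ we now have $L_{\tilde\eta}A=-\iota_vF_A+\diff_A\xi=0$, so $\tilde\eta$ is an infinitesimal automorphism of the definite connection $A$; by equivariance of $A\mapsto g_A$ it descends to the vector field $-v$ on $M$, which is therefore Killing for $g_A$. But $R_+<0$ forces $\Lambda=\tr(R_+)<0$, so $g_A$ is a compact Einstein manifold of negative Ricci curvature, and Bochner's theorem gives $-v=0$. Hence $\diff_A\xi=0$: $\xi$ is an $A$-parallel section of $\so(E)$. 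Finally a nonzero parallel $\xi$ would force every curvature form $F^i$ to be a pointwise multiple of the fixed line $\R\xi\subset\so(E)$, so the Gram matrix $Q^{ij}$ with $F^i\wedge F^j=Q^{ij}\mu_A$ would have rank at most one, contradicting definiteness of $A$. Thus $\xi=0$, proving injectivity and hence that $\diff_A^*\Pi\diff_A$ is an isomorphism.

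The step I expect to need the most care is the double bookkeeping in the middle paragraph — checking at once that $\Pi\diff_A\xi$ lands in the gauge-fixed space $W$ and that it equals $L_\eta A$ for a genuine $\eta\in\Lie(\G)$, with all the signs in $\Pi$ and in~\eqref{infinitesimal-action} consistent. Once that is pinned down, the appeal to Proposition~\ref{Hessian-positive-in-gauge} and to the Bochner vanishing theorem for Killing fields are routine, and the closing remark about parallel sections versus definiteness is elementary.
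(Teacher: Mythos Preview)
Your proposal is correct and follows essentially the same route as the paper: reduce to injectivity via ellipticity and index zero, observe that $\Pi\diff_A\xi$ is both fully gauge fixed and pure gauge, invoke Proposition~\ref{Hessian-positive-in-gauge} and gauge invariance to get $\Pi\diff_A\xi=0$, then use Bochner to kill the vector field and finish. The only cosmetic difference is at the very end: from $\diff_A\xi=0$ the paper concludes $F_A(\xi)=0$, i.e.\ $Y^{ij}\xi^j=0$, and uses invertibility of $Y$ directly, whereas you argue via $[F_A,\xi]=0$ that the Gram matrix $Q$ would have rank $\leq 1$, contradicting definiteness; both arguments are valid and equally short.
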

\begin{proof}
Now that we know $\diff_A^*\Pi\diff_A$ is elliptic of index zero, if suffices to show that it is injective. Suppose then that $\diff_A^*\Pi \diff _A \xi =0$	. This means that $\Pi \diff_A \xi$ is fully gauge fixed. By Proposition~\ref{Hessian-positive-in-gauge} it follows that either $\Pi \diff_A \xi=0$, or the Hessian in this direction is non-zero. Now, by definition of $\Pi$, there exists $v \in C^\infty(TM)$ such that $\Pi\diff_A \xi = \diff_A \xi + \iota_v F$. In other words, $\Pi\diff_A \xi$ is pure gauge. Since the pure connection action is gauge invariant, the Hessian vanishes in pure gauge directions. It follows that $\Pi \diff_A \xi =0$ or, in other words, that $\iota_v F =- \diff_A \xi$. 

Now the infinitesmial gauge transformation $\xi$ doesn't change the induced metric $g_A$ and so $v$ is a Killing field. But the metric has negative Ricci curvature and so $v$ must vanish\footnote{We mention the simple proof of this standard fact for completeness. If $v$ is Killing, Bochner's formula gives $\frac{1}{2}\Delta|v|^2 = |\nabla v|^2 - \Ric(v,v)$. When $\Ric<0$ and the manifold is compact, integrating this equality shows $v$ must vanish.}. It follows that $\diff_A \xi =0$. From this we see that $F_A(\xi)=0$, or in other words that $Y^{ij}\xi^j=0$. Since $Y$ is invertible we obtain $\xi=0$ as required. 
\end{proof}

This Proposition, together with the discussion immediately after equations~\eqref{alg-gauge-fix} and~\eqref{diff-gauge-fix}, completes the proof of Proposition~\ref{linear-gauge-fix}, which shows that, on the linear level, one can always fully gauge fix $a$ via the infinitesimal action of the gauge group. 

\subsection{The proof of Theorem~\ref{hessian-positive}}

We now give the proof of Theorem~\ref{hessian-positive}. Let $A$ be a critical point of the pure connection action $S$ and assume that the corresponding Einstein metric has $R_+<0$. Let $a$ be an infinitesimal deformation of $A$. By Proposition~\ref{linear-gauge-fix} we can write $a = b+L_{\eta}A$ for $b$ fully gauge-fixed and $\eta \in \Lie(\G)$. Since $S$ is $\G$ invariant, $D^2S(L_{\eta}A, \cdot )=0$. It follows that $D^2S(a,a) = D^2S(b,b)$. Theorem~\ref{hessian-positive} now follows from Proposition~\ref{Hessian-positive-in-gauge}.

\section{The proof of local rigidity}
\label{infinitesimal-rigidity}

In this section we show how Theorem~\ref{hessian-positive} implies our main result Theorem~\ref{main-result}. We give the statement again for convenience.

\begin{theorem*}
Let $g$ be an Einstein metric on a compact orientable 4-manifold $M^4$, with either $R_{+}<0$ or $R_{-}<0$. Then $g$ is locally rigid.
\end{theorem*}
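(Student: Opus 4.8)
The plan is to transport the problem through the pure connection dictionary, invoke Theorem~\ref{hessian-positive}, and then transport the conclusion back. Reversing the orientation of $M$ interchanges $R_+$ and $R_-$, so we may assume $R_+<0$; then $R=4\tr(R_+)<0$ and the Einstein constant is $\Lambda=R/4<0$. As $g$ is Einstein its curvature operator is $R_+\oplus R_-$, so (in the notation of \S\ref{plebanski-to-pure-connection}) $C=0$; since $R_+<0$ is non-degenerate with all eigenvalues of one sign, fixing a wedge-orthogonal $\Sigma$ with $g_\Sigma=g$ and pulling the Levi--Civita connection of $\Lambda^+_g$ back to $E$ via the isometry $\Sigma$ gives (Lemma~\ref{recognise-LC}) the unique connection $A$ on $E$ with $\diff_A\Sigma=0$; it is a negative-definite connection, a critical point of the pure connection action $S$, with $g_A=g$. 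The strategy is then: given an infinitesimal Einstein deformation $h$ of $g$, produce an infinitesimal deformation $a\in\Omega^1(M,\so(E))$ of $A$ which (i) solves the linearised Euler--Lagrange equation of $S$ --- i.e.\ lies in the kernel of the linearisation at $A$ of the map $A'\mapsto\diff_{A'}\Sigma_{A'}$ --- and (ii) induces $h$ as the first-order change of $g_A$. Theorem~\ref{hessian-positive} will then force $a$ to be pure gauge, and $h=L_vg$ will follow.

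Constructing this lift is, I expect, the main obstacle. The idea is to work on the Plebanski side, where the metric is parametrised by the wedge-orthogonal two-form. Since $g_\Sigma$ depends pointwise (algebraically) on $\Sigma$, the fibrewise linear map $\sigma\mapsto\dot g_\Sigma$, from linearised-wedge-orthogonal deformations of $\Sigma$ to $S^2T^*M$, is surjective, its kernel being the infinitesimal action of the vertical gauge group $\G_0$ (a dimension count: $13\to 10$, with three-dimensional kernel). Choose $\sigma$ with $\dot g_\Sigma=h$; the linearisation of Lemma~\ref{recognise-LC} then produces a unique compatible deformation $a$ of the Levi--Civita connection (characterised by $\diff_A\sigma+[a\wedge\Sigma]=0$), and we let $\phi$ be the resulting first-order change of $\Psi=W^+$. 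The crux is that, by the linearisation of Lemma~\ref{recognise-Einstein} (``$g_\Sigma$ Einstein $\iff$ $F_A$ self-dual''), $h$ being an infinitesimal Einstein deformation --- for the \emph{fixed} constant $\Lambda$ --- is precisely the statement that $(a,\sigma,\phi)$ solves all three linearised Plebanski Euler--Lagrange equations~\eqref{dA=0}, \eqref{dPsi=0} and~\eqref{dSigma=0}. Finally, $(\sigma,\phi)$ is the \emph{unique} pair for which the linearisations of~\eqref{dPsi=0} and~\eqref{dSigma=0} hold for the given $a$ --- this is exactly the injectivity of the map $\phi\mapsto S^2_0(X\phi)$ proved before Lemma~\ref{Hessian-before-gauge-fix} --- so $(a,\sigma,\phi)=\theta_*(a)$, and then the remaining equation~\eqref{dA=0} says precisely that $a$ satisfies the linearised Euler--Lagrange equation of $S$. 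The delicate point throughout is the bookkeeping: the self-duality and wedge-orthogonality conditions themselves deform with $\Sigma$, so the linearisations must be handled with care; and one must keep track of the fact that ``infinitesimally Einstein'' in our sense also pins down the Einstein constant --- which is what forces $\phi$ to be trace-free and hence an admissible Plebanski variation.

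With the lift in hand the remainder is short. Because the second and third Plebanski variations vanish identically along $\theta$, equation~\eqref{dA} gives $dS_{A'}(b)=\int_M b^i\wedge(\diff_{A'}\Sigma_{A'})_i$; thus $\diff_{A'}\Sigma_{A'}$ is (the Hodge dual of) the $L^2$-gradient of $S$, and its linearisation at the critical point $A$ is the Hessian $D^2S$ (viewed as an operator). So by~(i) we get $D^2S(a,\cdot)=0$, in particular $D^2S(a,a)=0$, whence by Theorem~\ref{hessian-positive} $a=L_\eta A$ for some $\eta\in\Lie(\G)$. Now $A'\mapsto g_{A'}$ is equivariant for $\G\to\Diff(M)$: writing $v$ for the vector field on $M$ underlying $\eta$ and $\gamma_t=\exp(t\eta)$ for the gauge flow covering the diffeomorphism flow $\phi_t=\exp(tv)$, and differentiating $g_{\gamma_t^*A}=\phi_t^*g_A$ at $t=0$, we see that the first-order change of $g_A$ induced by $a=L_\eta A$ is $L_vg_A=L_vg$. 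By~(ii) this change is $h$, so $h=L_vg$, which is local rigidity. (What remains is only the routine verification that the orientation and scaling conventions are mutually consistent for this particular $A$ --- that it is definite of sign matching $\Lambda<0$, that $\tr Y=\Lambda$, that $\Sigma_A$ is orientation-preserving --- all contained in \S\ref{plebanski-to-pure-connection}.)
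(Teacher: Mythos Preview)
Your proposal is correct and follows essentially the same route as the paper: lift the infinitesimal Einstein deformation $h$ to a Plebanski triple $(a,\sigma,\phi)$ via a wedge-orthogonal $\sigma$ with $\dot g_\Sigma=h$ and the linearised torsion-free condition, use the Einstein hypothesis to force $\phi$ trace-free and the anti-self-dual curvature variation to vanish, invoke the uniqueness of $(\sigma,\phi)$ given $a$ (the injectivity of $L$) to identify this triple with $\theta_*(a)$, conclude $a\in\ker D^2S$, and finish with Theorem~\ref{hessian-positive} and equivariance. The only cosmetic difference is that the paper phrases the lift via an explicit one-parameter family $\widehat\Sigma(t)$, $A(t)$ and compares $\widehat\Sigma'(0)$ with $\Sigma_{A(t)}'(0)$, whereas you work directly with the linearisations; the content is the same.
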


\begin{proof}
Let $h$ be an infinitesimal Einstein deformation of $g$. We must find a vector field $v$ on $M$ for which $L_vg =h$. The idea of the proof is to show that the metric deformation $h$ is induced by a deformation $a$ of the definite connection $A$ corresponding to $g$ and, moreover, with $a \in \ker D^2S$. This will imply that $a$ is pure gauge and so the same will be true of $h$.

It is certainly not true that an \emph{arbitrary} metric deformation can be achieved by deforming the connection. This can be seen by counting degrees of freedom. The space of connections is modelled on $\Omega^1(M, \so(E))$ and so has functional dimension 12. The gauge group $\G$ meanwhile has functional dimension 7, so one can only expect to reach a space of metrics with effective functional dimension 5. Meanwhile metrics have functional dimension 10 of which diffeomorphisms account for 4 dimensions, leaving effective functional dimension 6 which is larger. This means we must use in an essential way the fact that $h$ is an infinitesimal Einstein deformation.

Write $\M$ for the space of Riemannian metrics and $G \colon \D \to \M$ for the map which sends a definite connection to the corresponding Riemannian metric. This map is equivariant for the action of $\G$ on $\D$ and $\Diff(M)$ on $\M$. Let $A \in \D$ be the critical point of the pure connection action $S$ with $G(A)=g$. We seek $a \in T_A\D$ such that $D_A G(a) = h$ and with $a\in \ker D^2S$. From here, Theorem~\ref{hessian-positive} shows that $a = L_\eta A$ for some $\eta \in \Lie(\G)$. Writing $v \in C^\infty(M,TM)$ for the pushforward of $\eta$ to $M$, equivariance of $G$ then implies that $h=L_vg$.

To proceed let $\widehat{\Sigma}(t) \colon E \to \Lambda^2$ be a path of wedge-orthogonal 2-forms with $\widehat\Sigma(0)$ corresponding to $A$ via~\eqref{Sigma-from-F}. Write the corresponding path of metrics as 
\[
\widehat{g}(t) = g_{\widehat{\Sigma}(t)}
\] 
and choose $\widehat{\Sigma}(t)$ so that $\widehat{g}'(0) =h$. 

Now let $A(t)$ be the pull-back to $E$ of the $\widehat{g}(t)$-Levi-Civita connection on $\Lambda^+(\widehat{g}(t))$ via $\widehat{\Sigma}(t)$. By construction $A(0) =A$. Each connection $A(t)$ is again definite (at least for small $t$). Write $a = A'(0) \in T_A\D$. $A(t)$ determines a wedge-orthogonal map $\Sigma(t) = \Sigma_{A(t)}$ via~\ref{Sigma-from-F} and a path of metrics 
$g(t) = g_{A(t)}$. 

It is important to note that $g(t)$ need not be equal to the path $\widehat{g}(t)$ we first started with. We will prove that when $h$ is an infinitesimal Einstein deformation, that these paths agree to first order: $g'(0) = \widehat{g}'(0)$. Write $\sigma = \Sigma'(0)$ and $\widehat{\sigma} = \widehat{\Sigma}'(0)$. In fact, we will show that $\sigma = \widehat{\sigma}$, from which it follows that $g'(0) = \widehat{g}'(0)$. 

The 2-form $\widehat{\Sigma}(t)$ is wedge-orthogonal. Differentiating this condition with respect to $t$ and evaluating at $t=0$ we get 
\begin{equation}
\widehat{\sigma}_i \wedge \Sigma_j + \Sigma_j \wedge \widehat{\sigma}_i -\frac{2}{3} (\widehat{\sigma}_k \wedge \Sigma_k) \delta_{ij}
= 0
\label{double-hat-sigma}
\end{equation}
(Cf.\ equation \eqref{double-sigma}.) Next write
\[
F_{A(t)} = B(t) + C(t)
\]
where $B(t)$ is $\widehat{g}(t)$-self-dual and $C(t)$ is $\widehat{g}(t)$-anti-self-dual. Since $\widehat{g}(0)$ is Einstein and $h$ is an infinitesimal Einstein deformation, $C(t) = O(t^2)$. We write (locally) $B^i = \hat{Y}^{ij}(t)\widehat{\Sigma}_j(t)$. At $t=0$, $\widehat{Y}(0) = Y$, the matrix representative for $R_+$ of the original Einstein metric $\widehat{g}(0)=g$. Symmetries of the Riemann tensor ensure that $\widehat{Y}(t)$ is symmetric for all $t$. Moreover, since $h$ is an infinitesimal Einstein deformation, $\tr \widehat{Y} = \Lambda + O(t^2)$. If follows that $\widehat{Y}'(0)$ is trace-free.  Differentiating  $F_{A(t)}$ with respect to $t$ and evaluating at $t=0$ we get
\begin{equation}
(\diff_A a)^i 
	= 
		(\widehat{Y}')^{ij}(0)\Sigma_i 
		+ 
		Y^{ij}\widehat{\sigma}_j
\label{pre-gives-hat-sigma}
\end{equation}

Meanwhile, the path $A(t)$ of definite connections determines a path $(\Sigma(t),\Psi(t))$ via~\eqref{Sigma-from-F} and~\eqref{Psi-from-F}. The tangent $(a,\sigma,\phi)$ to the whole path $(A,\Sigma_A,\Psi_A)(t)$ of Plebanski variables at $t=0$ satisfies the equations~\eqref{double-sigma} and~\eqref{pre-gives-sigma}. Moreover, given $a$, we saw that there are unique $\sigma$ and $\phi$ solving~\eqref{double-sigma} and~\eqref{pre-gives-sigma}. But these equations are identical to~\eqref{double-hat-sigma} and~\eqref{pre-gives-hat-sigma} satisfied by $\widehat{\sigma}$, with $\phi$ replaced by $\widehat{Y}'(0)$. It follows that $\sigma = \hat{\sigma}$ as claimed (and $\phi = \widehat{Y}'(0)$ but we will not use this).

Finally we have to check that the tangent vector $a \in T_A \D$ is in $\ker D^2S$. This is equivalent to $a$ being tangent to the critical locus of $S$ which is in turn equivalent to $\diff_{A(t)} \Sigma(t) =O(t^2)$. Recall that $A(t)$ is the pull-back of the Levi-Civita connection via $\widehat{\Sigma}(t)$. It follows from Lemma~\ref{recognise-LC} that $\diff_{A(t)} \widehat{\Sigma}(t) = 0$ for all $t$. But we have just seen that $\widehat{\Sigma}(t) =\Sigma(t) + O(t^2)$ and so $\diff_{A(t)} \Sigma(t) =O(t^2)$ and $a \in \ker D^2S$ as claimed.  
\end{proof}

\end{document}